\newcommand{\upperbound}
   {9.5\epsilon^{-1}C}
\newcommand{\lowerbound}
  {(3/16)e^{-2}\ln(7) C \epsilon^{-1} (1 - \epsilon^{-1/2})^2(1 - (1 - \epsilon)/C)}
\newcommand{\kasfunctionofepsilonandC}
  {2.3(\gamma\epsilon)^{-1}}
\newcommand{\mean}{\mathbb{E}}
\newcommand{\prob}{\mathbb{P}}
\newcommand{\sd}{\operatorname{SD}}
\newcommand{\unif}{\textsf{Unif}}
\newcommand{\bern}{\textsf{Bern}}
\newcommand{\geo}{\textsf{Geo}}
\newcommand{\ind}{\textbf{1}}
\newcommand{\defeq}{\mathrel{\mathop{=}\limits^{\textrm{def}}}}
\begin{document}

\newtheorem{theorem}{Theorem}{
\newtheorem{lemma}{Lemma}
\theoremstyle{definition}
\newtheorem{definition}{Definition}

\title{Nearly optimal Bernoulli factories for \\ linear functions}

\author{Mark Huber \\ {\tt mhuber@cmc.edu}}

\maketitle

\begin{abstract}
Suppose that $X_1,X_2,\ldots$ are 
independent identically distributed Bernoulli random variables
with mean $p$.  
A Bernoulli factory for a function $f$ takes as input $X_1,X_2,\ldots$ 
and outputs
a random variable that is Bernoulli with mean $f(p).$  A fast algorithm
is a function that only depends on the values of 
$X_1,\ldots,X_T$, where $T$ is a stopping time with small
mean.  When $f(p)$ is a real analytic function the problem reduces
to being able to draw from linear functions $Cp$ for a constant $C > 1$.
Also it is necessary that 
$Cp \leq 1 - \epsilon$ for known $\epsilon > 0$.
Previous methods for this problem required extensive modification of 
the algorithm
for every value of $C$ and $\epsilon$.  
These methods
did not have explicit bounds on $\mean[T]$ as a function of 
$C$ and $\epsilon$.  This paper presents the first Bernoulli factory
for $f(p) = Cp$ 
with bounds on $\mean[T]$ as a function of the input 
parameters.  In fact,
$\sup_{p \in [0,(1-\epsilon)/C]} \mean[T] 
  \leq \upperbound.$
In addition, this method is very simple to implement.
Furthermore, a lower bound on the average running time of any 
$Cp$ Bernoulli factory is shown.
For 
$\epsilon \leq 1/2$, $\sup_{p \in [0,(1 - \epsilon)/C]} \mean[T] 
  \geq 0.004 C \epsilon^{-1}$, 
so the new method is optimal up to 
a constant in the running time.
\end{abstract}

\vspace*{5pt}
{\bf AMS codes: } 65C50; 68Q17
\vspace*{5pt}

\section{Introduction}

Let $\bern(p)$ denote the Bernoulli distribution with mean $p$, so
$X \sim \bern(p)$ means $\prob(X = 1) = p$ and $\prob(X = 0) = 1 - p$.
A Bernoulli random variable will also be referred to as a coin flip, with
heads equating to the value 1 and tails to the value 0.
Let $X_1,X_2,\ldots$ be an infinite sequence of independent, identically
distributed (iid) $\bern(p)$ random variables.

Following~\cite{keaneo1994}, 
a Bernoulli factory is an algorithm that takes independent realizations
of $X \sim \bern(p)$ and auxiliary variables with known distributions,
and simulates a Bernoulli variable with success probability 
$f(p)$.

Recall that $U \sim \unif([0,1])$ can be viewed as an iid sequence 
of $\bern(1/2)$ random variables simply by reading off the bits in 
the number $U$.  These bits can then be used to build an iid sequence of 
uniform random numbers in $[0,1]$.  Hence a single $U$ represents 
for notational purposes all the extra randomness utilized by any 
randomized algorithm.

\begin{definition}
Given $p^* \in (0,1]$ and a function $f:[0,p^*] \rightarrow [0,1]$, 
a {\em Bernoulli factory} is a computable 
function $\cal A$ that takes as input a number $u \in [0,1]$ together
with a sequence of values in $\{0,1\}$, 
and returns an output in $\{0,1\}$
where the following holds.  For any $p \in [0,p^*]$, 
$X_1,X_2,\ldots$ iid $\bern(p)$, and $U \sim \unif([0,1])$,
let $T$ be the infimum of times $t$ such that the value of 
${\cal A}(U,X_1,X_2,\ldots)$ only depends on the values of $X_1,\ldots,X_t$.
Then
\begin{enumerate}
\item{$T$ is a stopping time with respect to the natural
filtration and $\prob(T < \infty) = 1$.}
\item{${\cal A}(U,X_1,X_2,\ldots) \sim \bern(f(p))$.}
\end{enumerate}
Call $T$ the {\em running time} of the Bernoulli factory.
\end{definition}
Informally, ${\cal A}$ is a Bernoulli factory if it takes $T$ 
flips of a coin with $p \in [0,p^*)$ 
chance of heads together with a source of random bits, 
and returns a single coin flip that has 
probability $f(p)$ chance of heads.

The applications of Bernoulli factories go back at least as far as 
Von Neumann~\cite{vonneumann1951}, who showed how to build such a 
factory for $p^* = 1$ and $f(p) = 1/2$.  His method works as follows.
\begin{align*}
T &= \inf\{t \in \{2,4,\ldots\}:(X_{t - 1},X_{t}) \in \{(0,1),(1,0)\} \\
Y &= \ind((X_{T - 1},X_T) = (0,1)), 
\end{align*}
where $\ind(\text{expression})$ 
denotes the indicator function that evaluates to 
1 if the expression is true and 0
otherwise.  

In 1992 Asmussen raised the question of whether it was possible to 
construct a Bernoulli factory for $f(p) = Cp$, the application being
perfect sampling for certain positive recurrent regenerative 
processes~\cite{asmussengt1992}.

Keane and O'Brien~\cite{keaneo1994} showed that a Bernoulli factory
where $T$ is almost surely finite exists 
if and only if $f(p)$ is continuous over its domain and either
$f(p)$ is  
identically 0 or 1, or both $f(p)$ and $1 - f(p)$ are polynomially 
bounded away from 0 over the domain.  They gave an explicit algorithm
for building a Bernoulli factory in such a case using the fact that
continuous $f(p)$ could be well-approximated by Bernstein polynomials
(linear combinations of 
$p^k(1 - p)^{n - k}$ where $n$ and $k \leq n$ are integers.)
However, they did not prove
any bounds on $\mean[T]$ (upper or lower).

Nacu and Peres further refined the approach of 
Keane and O'Brien, showing that for $\epsilon > 0$,
and $p^* = (1 - \epsilon)/2$, the function $f(p) = 2p$ had a Bernoulli
factory where $T$ has exponentially declining tails
(Theorem 1 of~\cite{nacup2005}).  Moreover, they showed that 
in a sense $f(p) = 2p$
is the most important case, as a Bernoulli factory for $f(p) = 2p$
can be used to build a Bernoulli factory for any function that is 
real analytic in $(0,1)$ and bounded away from 1.

{\L}atuszy\'nski et. al~\cite{latuszynskikpr2011} 
gave the first practical implementation of
the Nacu and Peres approach.  Instead of dealing with sets of outcomes
of coins, they created a continuous approach using a sequence of functions
$L_n$ that lower bound (in expectation) $f(p)$, and $U_n$ that upper bound
(in expectation) $f(p)$.  $L_n$ is a reverse time supermartingale, while
$U_n$ is a reverse time submartingale, both of which monotonically converge
to $f(p)$ as the number of coins grows to infinity.  This allowed the 
algorithm to proceed without having a keep track of an (exponentially growing)
number of possible sequences of coin flips, and allowed the Nacu-Peres
algorithm to actually be implemented.

Flegal and Herbei~\cite{flegalh2012} developed a faster algorithm
by changing $f(p)$.  In the work of Nacu and Peres~\cite{nacup2005}, 
instead of restricting themselves to $Cp \leq 1 - \epsilon$, they 
used $f(p) = \min\{Cp,1-\epsilon\}$ so that the function was defined
over the entirety of $[0,1]$.  Of course this makes the derivative of
$f(p)$ discontinuous at $p = (1 - \epsilon)/C$.  
The idea of Flegal and Herbei was to
handle the $p \in [(1-\epsilon)/C,1]$ situation
differently, in such a way that
$f(p)$ became twice differentiable at $p = (1 - \epsilon)/C$.  
They then employed the {\L}atunszy\'nski et. al.~\cite{latuszynskikpr2011}
reverse time martingale method to actually perform the simulations.

Experimental results in~\cite{flegalh2012} 
indicated that this greatly improved upon the running time, although
the running time still appeared to be superlinear in $C$.

With the positive recurrent regenerative process application,
it makes sense to measure the running
time of the algorithm in terms of the number of coin flips, since 
for applications flipping a single coin involves running a Markov chain
for a large number of steps.  In other words, the coin flips dominate
the time needed to compute $\cal A$ by a large factor.

Thomas and Blanchet~\cite{thomasb2011} gave a different improvement to the 
Nacu and Peres algorithm, but did not analyze the running time.
Their numerical experiments indicate that
their algorithm also does not scale linearly with $C$.  As with the other
approaches mentioned, this was a variation of 
the original Keane and O'Brien method, which relies
on approximating the 
$f(p)$ function using Bernstein polynomials.

The novel approach presented here is very different.
Throughout the rest of this work, assume that $f(p) = Cp$, where 
$C > 1$, $p < (1 - \epsilon)/C$, and $\epsilon > 0$.
The new method operates by flipping coins or utilizing extra randomness
to change the target function $f(p)$ from step to step.
Moreover, it has a form that
allows $C$ and $\epsilon$ to change without requiring extensive calculation of 
new parameter values. This allows for the first time a direct computation
of an upper bound for 
$\mean[T]$.

\begin{theorem} 
\label{THM:upperbound}
The algorithm of Section~\ref{SEC:algorithm} 
is a Bernoulli factory with running time
\[
\sup_{p \in [0,(1 - \epsilon)/C]} \mean[T] \leq \upperbound.
\]
\end{theorem}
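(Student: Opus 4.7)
The plan is to verify two things about the algorithm in Section~\ref{SEC:algorithm}: first, that it halts almost surely and produces output distributed as $\bern(Cp)$ (so that it is indeed a Bernoulli factory in the sense of the definition), and second, that its expected running time satisfies the claimed bound.

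For correctness, since the algorithm changes its target function from step to step, I would track a state variable $C_t$ representing the ``current multiplier'' after $t$ flips and any auxiliary uniforms consumed, with $C_0 = C$. The central invariant to establish is
\[
\prob(\text{output} = 1 \mid \mathcal{F}_t,\, T > t) = C_t \cdot p,
\]
proved by case analysis of the per-step update rule (conditioning on whether the next coin is heads or tails and on any auxiliary uniform consumed). Once the algorithm reaches a state where the current target is trivially samplable, optional stopping at the almost surely finite time $T$ yields $\prob(\text{output}=1) = Cp$. Almost sure finiteness itself would follow from the running time bound once it is proved.

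For the running time, I would decompose $T$ into per-iteration contributions. If each outer iteration consumes $N_i$ coin flips with $\mean[N_i \mid \mathcal{F}_{i-1}] \le \alpha \epsilon^{-1}$ uniformly in the current state, and if the conditional probability of continuing to another iteration is at most $1 - \beta C^{-1}$, then a Wald-type identity gives $\mean[T] \le \alpha \beta^{-1} C \epsilon^{-1}$. The task is to extract the per-iteration cost constant $\alpha$ and per-iteration survival constant $\beta$ from the algorithm's parameter choices and verify $\alpha/\beta \le 9.5$. The supremum over $p \in [0,(1-\epsilon)/C]$ should be attained at an endpoint, so it suffices to check the boundary cases.

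The main obstacle is obtaining uniform bounds across $p$ while keeping the constant explicit and tight. The worst $p$ for per-iteration cost need not coincide with the worst $p$ for per-iteration survival, so one must either bound them separately without accumulating slack, or locate the extremal $p$ for the product and optimize directly. Translating from a loose $O(1)$ constant to the stated $9.5$ likely requires careful choice of the algorithm's internal tuning parameters (such as any safety gap separating $C_t \cdot p$ from $1$), with the final constant emerging from elementary but intricate bookkeeping on the recursion.
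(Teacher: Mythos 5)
Your correctness sketch---track the current target probability as a martingale and invoke optional stopping at the hitting time of $\{0,1\}$---is the paper's strategy (Lemma~\ref{THM:martingale} and Lemma~\ref{LEM:markov}). However, the invariant you propose, $\prob(\text{output}=1 \mid \mathcal{F}_t,\, T>t) = C_t \cdot p$, cannot hold with the state parametrized as a linear multiple of $p$: after a geometric draw the algorithm's current target is of the form $(C_j p)^i$ for varying $i\geq 1$, which is a degree-$i$ polynomial in $p$. You would need to carry the martingale over the richer state space $\{(C_j p)^i\}$ and verify both transition kernels (the ${\bf K}_{1,C}$ step and the thinning step ${\bf K}_{2,\alpha}$) preserve the martingale property, which is what Lemma~\ref{LEM:markov} does. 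That is a reparametrization you could fix.

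The running-time sketch has a genuine gap. You posit a uniform per-iteration cost $\mean[N_i\mid\mathcal{F}_{i-1}]\leq\alpha\epsilon^{-1}$ and a continuation probability at most $1-\beta C^{-1}$, then appeal to Wald; neither premise matches the algorithm. The expected number of flips in one pass of the inner loop is of order $C\epsilon^{-1}$, not $\epsilon^{-1}$: the walk on $i$ drifts upward at rate $(1-Cp)/(C-1)$, so reaching the threshold $k\asymp\epsilon^{-1}$ costs roughly $k(C-1)/(1-Cp)=\Theta(C\epsilon^{-1})$ steps (Lemma~\ref{LEM:mean}). More seriously, the per-stage cost is \emph{not} uniformly bounded: from Lemma~\ref{LEM:t2} and its generalization, $\mean[T_j]$ grows roughly like $(1-\gamma)^{-2(j-1)}$ because $k_j$ grows geometrically while $1-C_jp$ shrinks geometrically per stage. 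A flat Wald-type identity cannot absorb that growth. The paper's proof compensates by showing (Lemma~\ref{LEM:stagereached}) that the probability of even reaching stage $j$ decays like $\exp(-(j-1)\gamma\epsilon k)$, tuning $k=2.3(\gamma\epsilon)^{-1}$ so this decay strictly dominates the growth of $\mean[T_j]$, and summing the resulting geometric series in Theorem~\ref{THM:preciseupperbound} before specializing to $\gamma=1/2$. Also, the survival probability to the next stage is $\Theta(\epsilon)$ near $p=(1-\epsilon)/C$ and $\Theta(1)$ near $p=0$, never of the form $1-\beta C^{-1}$. To salvage your plan you would need stage-indexed cost bounds paired with matching geometric decay of the survival probability---but once you add that machinery you have essentially reconstructed the paper's argument, so the ``simple Wald identity'' framing is the missing piece rather than a shortcut.
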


(Theorem~\ref{THM:preciseupperbound} in Section~\ref{SEC:upper} gives
a more precise upper bound on $\mean[T]$.)
This new approach has the correct order of running time, 
as shown by the following lower bound on the average number of flips.

\begin{theorem}
\label{THM:lowerbound}
Any Bernoulli factory has 
\[
\sup_{p \in [0,(1 - \epsilon)/C]} \mean[T] \geq \lowerbound.
\]
\end{theorem}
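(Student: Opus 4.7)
The strategy is a two--point hypothesis testing argument built on the data processing inequality for Kullback--Leibler divergence. Fix two parameter values $p_0, p_1 \in [0, (1-\epsilon)/C]$ and take $p_1$ at the right endpoint $(1-\epsilon)/C$. Then $Y = {\cal A}(U, X_1, \ldots, X_T)$ has law $\bern(1-\epsilon)$ under $p_1$ and law $\bern(Cp_0)$ under $p_0$. Since $Y$ is a measurable function of $(U, X_1, \ldots, X_T)$ and $U$ has the same distribution under both parameters, data processing gives
\[
\text{KL}(\bern(Cp_1) \,\|\, \bern(Cp_0)) \;\leq\; \text{KL}\bigl(\mathcal{L}_{p_1}(X_1,\ldots,X_T) \,\big\|\, \mathcal{L}_{p_0}(X_1,\ldots,X_T)\bigr).
\]
Wald's identity applied to the iid log--likelihood increments $\log\bigl[(p_1/p_0)^{X_i}((1-p_1)/(1-p_0))^{1-X_i}\bigr]$, together with $\mean_{p_1}[T] < \infty$ (if $\mean_{p_1}[T] = \infty$ the theorem is vacuous), rewrites the right side as $\mean_{p_1}[T] \cdot \text{KL}(\bern(p_1) \,\|\, \bern(p_0))$. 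Combining,
\[
\mean_{p_1}[T] \;\geq\; \frac{\text{KL}(\bern(Cp_1) \,\|\, \bern(Cp_0))}{\text{KL}(\bern(p_1) \,\|\, \bern(p_0))},
\]
valid for every $p_0 \in [0, p_1)$.

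The second step is to optimize the choice of $p_0$. A Taylor expansion as $p_0 \to p_1$ quickly shows the right side is asymptotically $(C - 1 + \epsilon)/\epsilon = C\epsilon^{-1}(1 - (1-\epsilon)/C)$, which already exhibits the announced main--order scaling. To pin down the specific constants $(3/16)e^{-2}\ln 7$ and the correction factor $(1 - \sqrt{\epsilon})^2$, I would use a finite, nonasymptotic gap: choose $Cp_0$ bounded away from $1-\epsilon$ on a scale of order $\sqrt{\epsilon}$, which degrades the asymptotic bound by a factor of the shape $(1 - \sqrt{\epsilon})^2$. The appearance of $\ln 7$ strongly suggests that the optimum of the resulting one--parameter expression is attained at a specific rational value such as $Cp_0 = 1/7$, so that $\log(1/Cp_0)$ enters as $\log 7$ in the numerator KL, while the $e^{-2}$ factor is consistent with a tail or geometric estimate saturating along the way.

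The main obstacle is this final optimization: the two KL divergences involve parameters on very different scales (one near $0$, one near $1-\epsilon$), so a single Taylor expansion is not available throughout the relevant range of $p_0$, and the maximization must be handled by a direct calculus analysis with careful bookkeeping of lower--order terms. A secondary technical point is justifying Wald's identity for the log--likelihood ratio when $T$ may be heavy--tailed; this is routine by truncating at $T \wedge N$, applying the identity on the truncation, and passing to the limit via monotone convergence on the positive and negative parts of the log--likelihood increments separately.
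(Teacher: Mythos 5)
Your approach is genuinely different from the paper's. The paper reduces to the problem of \emph{estimating} $p$: it builds an $(\epsilon^{1/2},1/4)$-approximation algorithm for $p$ that calls the Bernoulli factory as a subroutine (draw $\bern(Cp)$ coins until 4 heads appear after $A$ trials, set $\hat p = 4/(CA)$, and apply Chebyshev), and then invokes the lower bound of Dagum, Karp, Luby and Ross on the running time of any $(\alpha,\beta)$-approximation algorithm. Your argument cuts out the middleman: a two-point bound via data processing for KL divergence plus Wald's identity for the stopped log-likelihood, which is essentially the engine inside the Dagum et al.\ result anyway. Your core inequality
\[
\mean_{p_1}[T] \;\geq\; \frac{\operatorname{KL}\bigl(\bern(Cp_1)\,\|\,\bern(Cp_0)\bigr)}{\operatorname{KL}\bigl(\bern(p_1)\,\|\,\bern(p_0)\bigr)}
\]
is correct (the factor $U$ contributes zero to the divergence since it is shared and independent), and your Taylor expansion as $p_0 \uparrow p_1$ correctly yields the leading constant $C\epsilon^{-1}\bigl(1-(1-\epsilon)/C\bigr)$, which is in fact larger than the paper's prefactor $(3/16)e^{-2}\ln 7 \approx 0.05$. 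So your route, if completed, would be both cleaner and quantitatively stronger.

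That said, as written this is not yet a proof, and your guess about the provenance of the constants is mistaken. In the paper, $\ln 7 = \ln\bigl((2-\beta)/\beta\bigr)$ with $\beta = 1/4$, and the $e^{-2}$ and $(1-\alpha)^2 = (1-\epsilon^{1/2})^2$ factors are inherited verbatim from the Dagum et al.\ lemma; they have nothing to do with optimizing the alternative $p_0$, and there is no reason to expect $Cp_0 = 1/7$ to appear. More importantly, the step you flag as ``the main obstacle'' is the entire content of the theorem: to actually prove the stated inequality you must choose a concrete $p_0$ (for instance with $Cp_0$ below $1-\epsilon$ by a margin of order $(\epsilon(1-\epsilon))^{1/2}$), produce a closed-form lower bound on the KL ratio valid uniformly for all $\epsilon$ in the relevant range, and verify it dominates the specific expression in the statement. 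Your asymptotics leave room for this, but until the nonasymptotic computation is carried out the argument establishes the correct scaling, not the theorem. Two smaller points worth tightening: you should justify that $\operatorname{KL}(\bern(p_1)\|\bern(p_0))$ is finite (it is, for $p_0 \in (0,1)$), and you should note that $\mean_{p_1}[T]=\infty$ makes the theorem trivially \emph{true}, not vacuous.
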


The remainder of the paper is organized as follows.  
Section~\ref{SEC:algorithm} presents the new method and proves 
correctness.
Section~\ref{SEC:upper} proves the precise 
upper bound on $\mean[T]$ and compares this upper bound
to simulations of the algorithm.  Then
Section~\ref{SEC:lower} proves the lower bound on $\mean[T]$.

\section{Algorithm}
\label{SEC:algorithm}

This section is organized as follows.  First the intuition of the
new method is presented.  Next the pseudocode is given based of off the
intuition, and finally the rigorous details are given.

\subsection{Intuition}

Suppose that instead of flipping one coin, first 
flip $B \sim \bern(p)$, and then another Bernoulli $Y$ whose
mean depends on $B$.  In particular, $[Y|B = 1] \sim \bern(p_1)$, and
$[Y|B = 0] = \bern(p_0)$.  Then
\begin{equation*}
\prob(Y = 1) = p p_1 + (1 - p) p_0.
\end{equation*}

Our goal is to make $\prob(Y = 1) = Cp$ where $C > 1$.  Let $p_1 = 1,$
then to make this work $p_0 = (C - 1)p/(1 - p)$ so that
\begin{equation*}
p(1) + (1 - p)(C - 1)p/(1 - p) = Cp.
\end{equation*}

At this point the problem of flipping a $Cp$ coin has been reduced to 
flipping a $(C - 1)p/(1 - p)$ coin.  Now employ a trick in Nacu and 
Peres~\cite{nacup2005} used there to reduce a Bernoulli factory problem
for real analytic $f(p)$ to a Bernoulli factory for $2p$.
The trick is to write $(C - 1)p/(1 - p)$ using the power series 
expansion.
\begin{equation*}
(C - 1)\frac{p}{1 - p} = (C - 1)p + (C - 1)p^2 + (C - 1)p^3 + \cdots.
\end{equation*}

Let $G \sim \geo(a)$ denote the geometric distribution, so 
for positive integers $g$, $\prob(G = g) = (1 - a)^{g - 1}a.$  Let
$G \sim \geo((C - 1)/C)$.  Then let $[W|G] \sim \bern((Cp)^G)$.  Then
\[
\prob(W = 1) = \sum_{i=1}^\infty \prob(G = i)(Cp)^i
 = \sum_{i=1}^\infty 
   \left(\frac{1}{C}\right)^{i-1}\left(\frac{C - 1}{C}\right)(Cp)^i
 = \sum_{i=1}^\infty (C - 1)p^i.
\]

So the problem has been reduced from generating a $(C - 1)p/(1 - p)$ 
coin to generating some positive number of $Cp$ coins.  This perhaps
seems circular, as this is the problem that we started with!  However,
something is gained here.

To see what, 
consider a concrete example.  Suppose that $\epsilon = 0.01$, so that
$Cp \leq 1 - 0.01$.  Suppose further than the problem of generating
one $Cp$ coin has been changed into the problem of generating a
$(Cp)^{100}$ coin.  Since $(Cp) \leq 1 - 0.01$, $(Cp)^{100} < e^{-1}$.
So there is at least a one half chance that the coin flip will result in
a 0.  Suppose that $R \sim \bern(1/2)$, and that $[Z|R = 1] \sim \bern(0)$,
and $[Z|R = 0] \sim \bern(r_0)$.  Then
\[
\prob(Z = 1) = (1/2)(0) + (1/2)r_0,
\]
so to make $\prob(Z = 1) = (Cp)^{100}$, $r_0 = 2(Cp)^{100} = (2^{1/100}C p)^{100}.$
In other words, in this case it is necessary to simulate 100 coins
from the Bernoulli factory $(2^{1/100} C) p$.  Note that $2^{1/100}$ is 
very close to 1, so $2^{1/100} C$ is not too far away from the original $C$.

\subsection{Pseudocode}

There are three phases that the algorithm goes through.  A single $p$-coin
flip gets us started, either moving straight to heads, or to 
a $(C - 1)p/(1-p)$-coin.  A draw of a geometric random variable then
allows us to reduce the $(C - 1)p/(1-p)$-coin to a $(Cp)^i$-coin for
some $i$.  That of course is the same as flipping a $Cp$-coin followed
by a $(Cp)^{i-1}$ coin.  Flip the $Cp$ coin in the same way, to either
get a head or get a $(Cp)^{j+1}$ coin, which combines with the remaining coins
to give a $(Cp)^{j + i}$ coin.  

Always after the $p$-coin flip and the Bernoulli, the remaining task is 
to flip a $(Cp)^i$ coin for some integer $k$.  Hopefully $i = 0$ at some
point, in which case the answer is heads since $(Cp)^0 = 1$.  
On the other hand, if $i$ gets
too large ($4.6 \epsilon^{-1}$ to be precise), flip a $(1 + \epsilon/2)^{-1}$-
coin.  If that coin comes up 0, return the answer $0$, otherwise the new
goal is to flip a $(C(1 + \epsilon/2) p)^i$-coin.
Notes that $C(1+\epsilon/2)p \leq 1 - \epsilon/2$, so our bound away from
1 has shrunk by a factor of 2.

This procedure is encapsulated in the following pseudocode.

\begin{center}
\begin{tabular}{rl}
\toprule
\multicolumn{2}{l}{{\sc Linear\_Bernoulli\_Factory}\ \  {\em Input:} 
 $C$, $\bern(p)$, $\epsilon$} \\
\midrule
1) & \hspace*{0em} $\gamma \leftarrow 1/2, \ 
     k \leftarrow \kasfunctionofepsilonandC$, 
     $i \leftarrow 1$, $\epsilon \leftarrow \min\{\epsilon,0.644\}$ \\
2) & \hspace*{0em} Repeat \\
3) & \hspace*{1em} Repeat \\
4) & \hspace*{2em} Draw $B \leftarrow \bern(p)$, $G \leftarrow \geo((C-1)/C)$ \\
5) & \hspace*{2em} $i \leftarrow i - 1 + (1 - B)G$ \\
6) & \hspace*{1em} Until $i = 0$ or $i \geq k$ \\
7) & \hspace*{1em} If $i \geq k$ \\
8) & \hspace*{2em}    Draw $R \leftarrow \bern((1 + \gamma \epsilon)^{-i})$ \\
9) & \hspace*{2em}    $C \leftarrow C(1 + \gamma \epsilon)$, 
  $\epsilon \leftarrow (1 - \gamma) \epsilon$, $k \leftarrow k/(1 - \gamma)$. \\
10) & \hspace*{0em}  Until $i = 0$ or $R = 0$. \\
11) & \hspace*{0em} Return $\ind(i = 0)$ \\
\bottomrule
\end{tabular}
\end{center}

\subsection{Details}

To prove the algorithm works, begin by building a stochastic process
$M_0,M_1,M_2,\ldots$ where
$M_t$ represents the mean of the Bernoulli to be drawn.  Begin with
$M_0 = Cp$.  

\begin{lemma}
\label{THM:martingale}
Let $M_0,M_1,\ldots$ be a martingale on $[0,1]$ with $M_0 = \theta$.
Let $T(\theta) = \inf\{t:M_t \in \{0,1\}\}$.  If
$\prob(T(\theta) < \infty) = 1$, then 
\[
M_{T(\theta)} \sim \bern(\theta).
\]
\end{lemma}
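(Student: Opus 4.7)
The plan is to invoke the optional stopping theorem (OST) for bounded martingales. By definition of $T(\theta)$, on the event $\{T(\theta) < \infty\}$ we have $M_{T(\theta)} \in \{0,1\}$, so the hypothesis $\prob(T(\theta) < \infty) = 1$ already tells us that $M_{T(\theta)}$ is almost surely a $\{0,1\}$-valued random variable. Hence $M_{T(\theta)} \sim \bern(q)$ for some $q \in [0,1]$, and the entire content of the lemma is that $q = \theta$.

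To identify $q$, I would first consider the stopped process $N_t \defeq M_{t \wedge T(\theta)}$. Since $M_t$ takes values in $[0,1]$, so does $N_t$, so the family $\{N_t\}_{t \geq 0}$ is uniformly bounded and hence uniformly integrable. Standard martingale theory then gives $\mean[N_t] = \mean[N_0] = \theta$ for all $t$, and the almost-sure convergence $N_t \to M_{T(\theta)}$ (which holds because $T(\theta) < \infty$ a.s.) combined with bounded convergence yields
\[
\mean[M_{T(\theta)}] = \lim_{t \to \infty} \mean[N_t] = \theta.
\]

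Finally, since $M_{T(\theta)}$ takes only the values $0$ and $1$, its expectation equals $\prob(M_{T(\theta)} = 1)$, so $\prob(M_{T(\theta)} = 1) = \theta$, which is exactly the claim that $M_{T(\theta)} \sim \bern(\theta)$.

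There is no real obstacle here beyond ensuring that OST applies: the key observation is that the $[0,1]$-boundedness of $M_t$ makes the stopped process uniformly integrable, sidestepping any need to check tail conditions on $T(\theta)$ itself. The only hypothesis needed about $T(\theta)$ is almost-sure finiteness, which is exactly what the lemma assumes.
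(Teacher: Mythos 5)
Your proof is correct and takes essentially the same route as the paper: both invoke the optional stopping theorem via the $[0,1]$-boundedness (hence uniform integrability) of the martingale to conclude $\mean[M_{T(\theta)}] = \theta$, then use $\prob(T(\theta)<\infty)=1$ to identify $M_{T(\theta)}$ as a $\{0,1\}$-valued random variable with mean $\theta$. You simply unpack the OST argument one level further (stopped process plus bounded convergence), whereas the paper cites it directly.
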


\begin{proof}
Since $M_t$ is a bounded (and hence uniformly integrable) martingale,
the Optional Stopping Theorem applies 
(see for instance~\cite[p. 269]{durrett2005}) and 
$\mean[M_{T(\theta)}] = \mean[M_0] = \theta.$  Since 
$\prob(T(\theta) < \infty) = 1$,
$M_{T(\theta)}$ is a $\{0,1\}$ random variable with mean $\theta$, so
$M_{T(\theta)} \sim \bern(\theta)$.
\end{proof}

The next lemma presents the moves the martingale will take.
\begin{lemma}
\label{LEM:markov}
Consider a stochastic process $\{M_k:k \geq 0\}$ satisfying the following.
For each $t$, 
given $M_0,\ldots,M_{t-1}$, the value of $M_t$ is chosen according to either
the transition kernel ${\bf K}_{1,C}(\cdot)$ or the transition kernel
$K_{2,\alpha}$.  The kernel $K_{1,C}$ is defined as
\begin{align*}
{\bf K}_{1,C}((Cp)^i,(Cp)^{i-1}) &= p \\
{\bf K}_{1,C}((Cp)^i,(Cp)^{i + j}) &= (1 - p)(C - 1)C^{-j-1}, & 
  \text{where } j \in \{0,1,2,\ldots\}.
\end{align*}
(Note this kernel can only be applied when $M_{t-1}$ is of the form
$(Cp)^i$ for some $i \in \{1,2,\ldots\}$.)

The kernel $K_{2,\alpha}$ is defined (for $\alpha > 1$ and $0 < x \leq 1/\alpha$
) as
\begin{align*}
{\bf K}_{2,\alpha}(x, \alpha x) &= 1 / \alpha,\\ 
{\bf K}_{2,\alpha}(x,0) &= 1 - 1/\alpha.
\end{align*}

Then $\{M_t\}$ is a martingale with respect to the natural filtration.
\end{lemma}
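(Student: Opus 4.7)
The plan is to verify the martingale identity $\mean[M_t \mid M_0,\ldots,M_{t-1}] = M_{t-1}$ directly for each of the two transition kernels. Since the choice between $K_{1,C}$ and $K_{2,\alpha}$ at step $t$ depends only on $M_0,\ldots,M_{t-1}$, the tower property will then yield the martingale conclusion regardless of how these choices are interleaved. So the bulk of the work is two conditional expectation computations.

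First I would handle $K_{2,\alpha}$, since it is immediate: conditional on $M_{t-1} = x \in (0,1/\alpha]$, the next value is $\alpha x$ with probability $1/\alpha$ and $0$ with probability $1 - 1/\alpha$, so $\mean[M_t \mid M_{t-1} = x] = (1/\alpha)(\alpha x) + (1 - 1/\alpha)(0) = x$. I would also note in passing that the probabilities $1/\alpha$ and $1 - 1/\alpha$ sum to 1, so the kernel is well defined.

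Next I would tackle $K_{1,C}$, which has the geometric tail. Here I would first check the normalization by summing a geometric series: $p + \sum_{j=0}^\infty (1-p)(C-1)C^{-j-1} = p + (1-p)(C-1)\cdot\frac{1/C}{1-1/C} = p + (1-p) = 1$. Then I would compute
\begin{align*}
\mean[M_t \mid M_{t-1} = (Cp)^i]
 &= p\,(Cp)^{i-1} + \sum_{j=0}^\infty (1-p)(C-1)C^{-j-1}(Cp)^{i+j} \\
 &= p\,(Cp)^{i-1} + (1-p)(C-1)(Cp)^i C^{-1}\sum_{j=0}^\infty p^j \\
 &= p\,(Cp)^{i-1} + (C-1)C^{-1}(Cp)^i,
\end{align*}
after using $C^{-j-1}(Cp)^j = C^{-1}p^j$ and $\sum_{j\geq 0} p^j = 1/(1-p)$. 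Factoring out $(Cp)^{i-1}$ gives $(Cp)^{i-1}[p + (1-1/C)Cp] = (Cp)^{i-1}\cdot Cp = (Cp)^i = M_{t-1}$.

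Finally I would combine the two pieces: for any $t$, let $\mathcal{F}_{t-1}$ denote the information in $M_0,\ldots,M_{t-1}$; since the kernel applied at step $t$ is $\mathcal{F}_{t-1}$-measurable and each kernel satisfies $\mean[M_t\mid \mathcal{F}_{t-1}] = M_{t-1}$, the overall process is a martingale with respect to its natural filtration. The only real calculation is the geometric sum in the $K_{1,C}$ case, and that is a one-line telescoping once the exponents are rearranged; I do not foresee a genuine obstacle, just bookkeeping to ensure the index $i$ is constrained so that the kernel is applicable (the statement already restricts $K_{1,C}$ to $i \geq 1$, and $K_{2,\alpha}$ to $x \leq 1/\alpha$, both of which keep $M_t$ in $[0,1]$).
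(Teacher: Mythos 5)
Your proposal is correct and matches the paper's proof: both verify $\mean[M_t\mid\mathcal{F}_{t-1}]=M_{t-1}$ by a direct conditional-expectation computation for each kernel, using the same geometric series for $K_{1,C}$ and the trivial two-point average for $K_{2,\alpha}$. The only difference is cosmetic (you factor out $(Cp)^{i-1}$ while the paper factors out $(Cp)^i$), and your added checks of kernel normalization are a harmless bonus.
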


%\begin{lemma}
%\label{LEM:markov}
%Consider the following transition kernels for Markov chains, where 
%${\bf K}(i,j) = \prob(M_t = j|M_{t-1} = i)$ in chain $i$.  
%
%In the first type of chain, for $C > 1$ and any
%$i \in \{1,2,\ldots,\}$, let
%\begin{align*}
%{\bf K}_{1,C}((Cp)^i,(Cp)^{i-1}) &= p \\
%{\bf K}_{1,C}((Cp)^i,(Cp)^{i + j}) &= (1 - p)(C - 1)C^{-j-1}, & 
%  \text{where } j \in \{0,1,2,\ldots\}.
%\end{align*}
%
%In the second type of chain fix $\alpha > 1$ and for all $0 < x \leq 1/\alpha$,
%\begin{align*}
%{\bf K}_{2,\alpha}(x, \alpha x) &= 1 / \alpha,\\ 
%{\bf K}_{2,\alpha}(x,0) &= 1 - 1/\alpha.
%\end{align*}
%Suppose given $M_0,\ldots,M_{t-1}$, $M_t$ is chosen by one of these kernels.
%(Kernel ${\bf K}_{1,C}$ can only be used when 
%$M_{t-1} = (Cp)^i$ for some $i \in \{1,2,\ldots\}$.)
%Then $\{M_t\}$ is a martingale.
%\end{lemma}
%
\begin{proof}
Let ${\cal F}_{i}$ be the $\sigma$-algebra generated by $M_0,\ldots,M_{i}$.
To show the result, check that 
$\mean[M_t|{\cal F}_{t-1}] = M_{t-1}$.  If kernel ${\bf K}_{1,C}$ is used:
\begin{align*}
\mean[M_t|M_{t-1} = (Cp)^i,{\cal F}_{t-2}] &= p(Cp)^{i-1} + 
  (1-p)\sum_{j=0}^\infty (Cp)^{i + j}(C-1)(1/C)^{j + 1} \\
 &= (Cp)^i\left[C^{-1} + (1 - p)(C - 1)C^{-1}\sum_{j=0}^\infty p^j \right] \\
 &= (Cp)^i[C^{-1} + (C - 1)C^{-1}] \\
 &= (Cp)^i = M_{t-1}.
\end{align*}
When ${\cal F}_{t - 1}$ leads to the use of a ${\bf K}_{2,\alpha}$ kernel:
\begin{align*}
\mean[M_t|M_{t-1} = x,{\cal F}_{t-2}] &= 
(1/\alpha) \alpha x + 0(1 - 1 / \alpha) = x = M_{t-1}.
\end{align*}
\end{proof}

Taking a step in a ${\bf K}_{1,C}$ chain
requires the generation of one $\bern(p)$,
and generation of one geometric random variable with mean
$C/(C - 1)$ if the Bernoulli is 0.  Taking a step in any of the $\alpha$
chains requires no $\bern(p)$ flip, only a single $\bern(1/\alpha)$.

Now everything is in place to solve the original problem of drawing a 
$Cp = (Cp)^1$ coin.  Given $C$ and $\alpha$, fix $\gamma \in (0,1)$ and 
$k > 0$ as
\[
\gamma = 1/2, \ k = \kasfunctionofepsilonandC
\] 
(Any choice of $\gamma \in (0,1)$ gives a correct algorithm, using 
$\gamma = 1/2$ will
give the running time bound in Theorem~\ref{THM:upperbound}.)
Start with $M_0 = Cp$ and take steps using the ${\bf K}_{1,C}$ kernel until
the state is 
either $(Cp)^0$ or $(Cp)^{i}$ where
$i \geq k$.  

At this point, let $\alpha = (1 + \gamma \epsilon)^{i}$, and take one step
in ${\bf K}_{2,\alpha}$.  The result is either the state moves to 0 (in which
case return 0 and quit) or the state moves to 
$(C(1 + \gamma \epsilon) p)^{i}$.  Now start taking moves in
${\bf K}_{1,C(1 + \gamma \epsilon )}$ until either $(C(1 + \gamma)p)^0$ or
$(C(1 + \gamma \epsilon)p)^{i}$ with $i \geq k/(1 - \gamma)$ is reached.

Keep repeating this process, taking away a fraction $\gamma$ of 
$\epsilon$ each time 
and multiplying
$k$ by $(1 - \gamma)^{-1}$ to compensate 
at each step until either a 0 coin state is reached, or the 
${\bf K}_\alpha$ chain returns 0, at which point terminate.

At each step in the algorithm the parameter of the coin being flipped
changes according to one of the Markov chains from Lemma~\ref{LEM:markov},
and so forms a martingale.
In light of Theorem~\ref{THM:martingale}, to prove the 
algorithm is correct requires only that $\prob(T < \infty) = 1$, where
$T$ is the number of calls to $\bern(p)$.  In fact, in the next section
a much stronger result, a bound on $\mean[T]$, is shown.

Note that the ${\bf K}_{1,C}$ and ${\bf K}_{2,\alpha}$ kernels given here
are not the only transitions possible.  There are an uncountable number
of ways to build such a martingale.  As a simple example, for 
$C > 2$ you could flip two $p$-coins initially instead of just one
and then work from there.
The choices made here are for the simplicity of the algorithm and the 
ability to bound the expected running time as shown in the next section.  

\section{Upper bound on running time}
\label{SEC:upper}

Line 4 is the only line in {\sc Linear\_Bernoulli\_Factory}
where a call to $\bern(p)$ is made.  This line is inside a repeat
loop from lines 3--6.  To bound the average number of times this
repeat loop is run, 
let $i_t$ denote the value of $i$ at line 5 after this
line has been executed $t$ times (let $i_0 = 1$).  From
Lemma~\ref{LEM:markov}, $(Cp)^{i_t}$ is a martingale.  
Let $\tau = \inf\{t:i_t = 0 \text{ or } i_t \geq k\}$.  Since $i_{t \wedge \tau}$
is a Markov chain with transient states $\{1,2,\ldots,\lceil k \rceil - 1\}$,
$\prob(\tau < \infty) = 1$.  So $i_\tau$ is either $0$ or at least $k$.

\begin{lemma}
\label{LEM:gamblersruin}
Let $p_k = \prob(i_\tau \geq k)$.  Then
\[
p_k \leq \frac{1 - Cp}{1 - (Cp)^k}.
\]
\end{lemma}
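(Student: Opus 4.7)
The plan is to apply the Optional Stopping Theorem to the bounded martingale $(Cp)^{i_t}$ and extract the desired bound from a one-line expectation decomposition.

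First I would check that the hypotheses for optional stopping are met: by Lemma~\ref{LEM:markov}, $(Cp)^{i_t}$ is a martingale with values in $[0,1]$, hence uniformly integrable, and as already observed $\tau$ is a.s.\ finite since $i_{t \wedge \tau}$ is a finite-state Markov chain whose states $\{1,\ldots,\lceil k\rceil - 1\}$ are all transient. Therefore
\[
\mean\bigl[(Cp)^{i_\tau}\bigr] = (Cp)^{i_0} = Cp.
\]

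Next I would split the expectation on the right according to whether $i_\tau = 0$ or $i_\tau \geq k$, which are the only possibilities. On $\{i_\tau = 0\}$ the summand is $1$, contributing $1 - p_k$, and on $\{i_\tau \geq k\}$ I would use the crude bound $(Cp)^{i_\tau} \leq (Cp)^k$ (valid because $Cp < 1$ and $i_\tau \geq k$). This gives
\[
Cp \;=\; (1 - p_k) + \mean\bigl[(Cp)^{i_\tau}\,\ind(i_\tau \geq k)\bigr]
   \;\leq\; (1 - p_k) + p_k (Cp)^k.
\]
Rearranging yields $p_k\bigl(1 - (Cp)^k\bigr) \leq 1 - Cp$, which is precisely the claimed bound.

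There is no real obstacle here: the only point worth being careful about is the applicability of optional stopping, and boundedness of the martingale in $[0,1]$ handles that. The argument is essentially a standard gambler's-ruin-by-martingale calculation, with the twist that the ``upper barrier'' is a set $\{k, k+1, k+2, \ldots\}$ rather than a single point, which is why the bound is an inequality rather than an equality.
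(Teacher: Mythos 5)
Your proof is correct and takes essentially the same approach as the paper: both arguments exploit that $(Cp)^{i_t}$ is a bounded martingale, apply optional stopping to conclude $\mean[(Cp)^{i_\tau}] = Cp$, decompose the expectation over the two possible values of $i_\tau$, and rearrange. The only cosmetic difference is that you invoke the Optional Stopping Theorem directly via uniform integrability, whereas the paper works with the stopped martingale $(Cp)^{i_{t\wedge\tau}}$ and passes to the limit $t\to\infty$, effectively re-deriving the same consequence of OST in place.
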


\begin{proof}
Since $(Cp)^{i_t}$ is a martingale, so is $(Cp)^{i_{t \wedge \tau}}$.  So
$\mean[(Cp)^{i_{t \wedge \tau}}] = Cp$, and
\begin{align*}
Cp &= 1 \cdot \prob(i_{t \wedge \tau} = 0) + 
 \prob(i_{t \wedge \tau} \geq k)\mean[(Cp)^i_{t \wedge \tau}|i_{t \wedge \tau} \geq k]
 + \prob(t < \tau)\mean[(Cp)^{i_t}|t < \tau] \\
 &\leq \prob(i_{t \wedge \tau} = 0) + (Cp)^k \prob(i_{t \wedge \tau} \geq k)
       + \prob(t < \tau)(Cp).
\end{align*}
Since this inequality holds for all $t$ and involves probabilities
(which are just expectations of bounded random variables), 
taking the limit as $t$ goes to infinity gives 
\[
Cp \leq [1 - \prob(i_{\tau} \geq k)] + (Cp)^k \prob(i_{\tau} \geq k),
\]
and solving for $p_k = \prob(i_\tau \geq k)$ gives the result.
\end{proof}

\begin{lemma}  
\label{LEM:mean}
For $\{i_t\}$ and $\tau$ as before,
\[
\mean[\tau] \leq \frac{k(C - 1) + C}{1 - (Cp)^k} - \frac{C - 1}{1 - Cp}.
\] 
\end{lemma}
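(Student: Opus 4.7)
My plan is to apply Wald's identity to the iid random walk $i_t = 1 + \sum_{s=1}^{t} X_s$, where $X_s = -1 + (1 - B_s) G_s$ with $B_s \sim \bern(p)$ and $G_s \sim \geo((C-1)/C)$ independent. A quick calculation gives
\[
\mean[X_1] = -1 + (1-p)\cdot\frac{C}{C-1} = \frac{1 - Cp}{C-1},
\]
so once I verify $\mean[\tau] < \infty$, Wald will produce $\mean[\tau] = (C-1)(\mean[i_\tau] - 1)/(1-Cp)$ and the problem reduces to bounding $\mean[i_\tau]$. Finiteness of $\mean[\tau]$ comes for free: the chain restricted to the complement of the absorbing regions lives on the finite set $\{1,\ldots,\lceil k \rceil - 1\}$, and from any such state the sequence of $\lceil k \rceil - 1$ consecutive $B = 1$ outcomes drives the walk to $0$, so $\tau$ has geometric tails.

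The heart of the argument is the conditional expectation $\mean[i_\tau \mid i_\tau \geq k]$. Since $i_\tau = 0$ contributes nothing,
\[
\mean[i_\tau] = p_k \cdot \mean[i_\tau \mid i_\tau \geq k].
\]
Conditioned on $i_{\tau-1} = j \in \{1,\ldots,\lceil k \rceil - 1\}$, entering the upper absorbing set forces $B_\tau = 0$ and $G_\tau \geq \lceil k \rceil - j + 1$. Memorylessness of the geometric collapses the calculation: $\mean[G \mid G \geq n] = n - 1 + C/(C-1)$, so
\[
\mean[i_\tau \mid i_{\tau-1} = j,\, i_\tau \geq k] = (j-1) + (\lceil k \rceil - j) + \frac{C}{C-1} = \lceil k \rceil - 1 + \frac{C}{C-1} \leq k + \frac{C}{C-1}.
\]
This bound is independent of $j$, so averaging yields $\mean[i_\tau] \leq p_k(k(C-1) + C)/(C-1)$.

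Substituting into the Wald expression and applying Lemma~\ref{LEM:gamblersruin} to bound $p_k \leq (1-Cp)/(1-(Cp)^k)$ produces
\[
\mean[\tau] \leq \frac{k(C-1) + C}{1 - (Cp)^k} - \frac{C-1}{1-Cp}
\]
once a factor of $(1-Cp)$ cancels in the first term. The only subtle point I anticipate is noticing the memoryless reduction that makes $\mean[i_\tau \mid i_\tau \geq k, i_{\tau-1} = j]$ independent of $j$; once that observation is in hand, the rest is routine algebra.
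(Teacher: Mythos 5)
Your proof is correct and takes essentially the same route as the paper: a first-moment identity for the random walk turns $\mean[\tau]$ into $\mean[i_\tau]$, the overshoot $\mean[i_\tau\mid i_\tau\geq k]$ is bounded by $k + C/(C-1)$, and Lemma~\ref{LEM:gamblersruin} finishes. The paper obtains the Wald-type identity by constructing the martingale $i_{t\wedge\tau} - [(1-Cp)/(C-1)](t\wedge\tau)$ by hand and running an optional-stopping/dominated-convergence argument, whereas you invoke Wald's equation directly; your memorylessness calculation for the overshoot also makes precise what the paper states only informally. One trivial repair: your geometric-tail argument for $\mean[\tau]<\infty$ (force $\lceil k\rceil-1$ consecutive $B=1$ outcomes) breaks when $p=0$; instead observe that from any state $j\in\{1,\ldots,\lceil k\rceil-1\}$ the walk exits in a single step with probability at least $(1-p)\,C^{-(\lceil k\rceil-1)}$ via one large geometric jump, which works for all $p\in[0,(1-\epsilon)/C]$.
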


\begin{proof}
When $0 < i_t < k$, 
\[
\mean[i_t|i_{t-1},\ldots,i_0] = i_{t-1} - 1 + (1 - p)\frac{C}{C - 1} = 
  i_{t-1} + \frac{1 - Cp}{C - 1}.
\]
Hence 
\[
i'_t = i_{t \wedge \tau} - [(1 - Cp)/(C - 1)](t \wedge \tau)
\]
is a martingale.  Since $i'_0 = i_{0 \wedge \tau} = 1 = \mean[i'_t]$, 
this gives
\begin{align*}
1 &= \mean[i'_{t}] = \mean[i'_{\tau}\cdot \ind(\tau \leq t)]
 + \mean[i'_{t} \cdot \ind(\tau > t)].
\end{align*}
Each of these terms can be bounded:
\begin{align*}
\mean[i'_{\tau}\cdot \ind(\tau \leq t)]
 &= \mean[(i_{\tau} - [(1 - Cp)/(C - 1)] \tau) \cdot \ind(\tau \leq t)] \\
\mean[i'_t \ind(\tau > t)] &\leq \mean[i_t \ind(\tau > t)].
\end{align*}
Using $\mean[i_\tau \ind(\tau \leq t)] \leq \mean[i_\tau]$ gives
\begin{equation}
\label{EQN:bounds}
1 \leq \mean[i_\tau] 
 - [(1 - Cp)/(C - 1)]\mean[\tau\ind(\tau \leq t)] 
 + \mean[i_{t}\ind(\tau > t)].
\end{equation}

Since $i_{t \wedge \tau}$ is a Markov chain where the
states between 0 and $k$ are transient, $\mean[\tau]$ is finite.
Hence $\mean[\tau \ind(\tau \leq t)] \leq \mean[\tau] < \infty$ for all $t$.

On the last step before $t = \tau$, $i_t$ either moves to 0, or increases by
a geometric random variable with mean $C/(C - 1)$, bringing the bound to 
$(k + C/(C - 1))\prob(i_\tau \geq k)$.  Hence
\begin{align*}
\mean[i_\tau] &=
 \mean[i_\tau |i_\tau \geq k] \prob(i_\tau \geq k) + 0 \cdot \prob(i_\tau = 0) \\ 
&\leq [k + C/(C - 1)](1 - Cp)/(1 - (Cp)^k),
\end{align*}  
using $\prob(i_\tau \geq k) \leq (1 - Cp)/(1 - (Cp)^k)$ from the previous lemma.

Now consider $\mean[i_t \ind(t< \tau)]$.  This is bounded as well because
$i_t \ind(t < \tau)$ must lie in $(0,k)$.  Therefore
the Lebesgue dominated convergence theorem allows taking
the limit as $t \rightarrow \infty$ inside the expectations, giving
\[
1 \leq [k + C/(C - 1)](1 - Cp)/(1 - (Cp)^k) - [(1 - Cp)/(C - 1)]\mean[\tau].
\]
So $\mean[\tau] \leq [k(C - 1) + C]/(1 - (Cp)^k) - (C - 1)/(1 - Cp).$
\end{proof}

The previous lemma bounds the expected number of flips on the first pass through
line 3--6, but this is embedded in a larger repeat loop in lines 2--10.
Let $T_i$ denote the number
of flips on pass $i$ through the larger repeat loop (so $T_1 = \tau$) if
this pass actually occurs.  
Consider the second pass where $i_{2,t}$ is the value of $i$ after $t$ times
through lines 3--6 in the second pass through 2--10.  (Set 
$i_{2,0} = i_{\tau}$ to be the 
ending value of first pass.)  For the second pass to run,
$i_{2,0} \geq k$, but now the parameter value is $C(1 + \gamma \epsilon)p$,
and the $i_{2,t}$ process stops when it is either $0$ or at least 
$k / (1 - \gamma)$.
Let $\tau_2 = \inf\{t:i_{2,t} = 0 \text{ or } i_{2,t} \geq k/(1 - \gamma)\}$.    

\begin{lemma}
\label{LEM:t2}
Let $k_2 = k/(1 - \gamma)$ and 
$C_2 = C(1 + \gamma\epsilon)$.  Then
\[
\mean[T_2] \leq [\gamma k_2(C_2 - 1) + C_2]
   (1 - C_2 p)^{-1}.
\]
\end{lemma}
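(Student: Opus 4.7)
The plan is to mimic the proof of Lemma~\ref{LEM:mean} for the pass~2 process, which evolves under kernel ${\bf K}_{1,C_2}$ from the random starting value $i_{2,0} = i_\tau \geq k$. By the computation in Lemma~\ref{LEM:markov} applied with $C_2 = C(1+\gamma\epsilon)$, the one-step drift of $i_{2,t}$ while $0 < i_{2,t} < k_2$ is $(1-C_2 p)/(C_2-1)$, so the compensated process
\[
i'_{2,t} \;=\; i_{2,\, t\wedge\tau_2} \;-\; \frac{1-C_2 p}{C_2-1}\,(t\wedge\tau_2)
\]
is a martingale with $\mean[i'_{2,0}] = \mean[i_\tau]$. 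Since $i_{2,\, t\wedge\tau_2}$ is a Markov chain whose non-absorbing states lie in $\{1,\ldots,\lceil k_2 \rceil - 1\}$, $\mean[\tau_2]$ is finite, and the same Lebesgue dominated convergence argument used in Lemma~\ref{LEM:mean} applied to $\mean[i'_{2,t}] = \mean[i_\tau]$ yields
\[
\mean[T_2] \;=\; \mean[\tau_2] \;=\; \frac{C_2-1}{1-C_2 p}\,\bigl(\mean[i_{2,\tau_2}] - \mean[i_\tau]\bigr).
\]

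Next I would bound $\mean[i_{2,\tau_2}] - \mean[i_\tau]$ above by $\gamma k_2 + C_2/(C_2-1)$. If $i_\tau \geq k_2$ then $\tau_2 = 0$ and the difference is zero, so the case of interest is $k \leq i_\tau < k_2$. On the last step before $\tau_2$ we have $i_{2,\tau_2-1} < k_2$, and $i_{2,\tau_2} = i_{2,\tau_2-1} - 1 + G$ with $G \sim \geo((C_2-1)/C_2)$; the memoryless property of $G$ gives $\mean[i_{2,\tau_2} \mid i_{2,\tau_2-1} = j,\, i_{2,\tau_2}\geq k_2] = k_2 - 1 + C_2/(C_2-1)$, independently of $j$. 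Combining with $i_\tau \geq k$ and the identity $k_2 - k = \gamma k_2$ gives
\[
\mean[i_{2,\tau_2}] - \mean[i_\tau] \;\leq\; \gamma k_2 + \frac{C_2}{C_2-1},
\]
and multiplying by $(C_2-1)/(1-C_2 p)$ yields the claimed bound.

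The main obstacle is that $i_\tau$ is not a deterministic constant but the (random) overshoot of pass~1 above $k$, so it can in principle exceed $k_2$. This edge case is handled cleanly because $\tau_2 = 0$ there makes the right-hand side zero, while in the typical regime $k \leq i_\tau < k_2$ the memoryless argument bounds the expected overshoot at $\tau_2$ uniformly in the pre-stopping history $i_{2,\tau_2-1}$. The remaining care is bookkeeping: verifying $\mean[\tau_2] < \infty$ so OST applies (which follows from the finite-state transience of $i_{2,\, t\wedge\tau_2}$ and a geometric tail on the exit time), and noting that $\mean[i_\tau] \geq k$ is all that is needed from pass~1 to close the argument.
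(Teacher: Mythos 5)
Your proof is correct and follows essentially the same route as the paper's: construct the drift-compensated martingale $i'_{2,t}$ from the kernel ${\bf K}_{1,C_2}$, apply optional stopping via $t\wedge\tau_2$ and a convergence argument, bound the overshoot at $\tau_2$ by the memorylessness of the geometric step, and use $i_{2,0}\geq k$ together with $k_2-k=\gamma k_2$. The only difference is presentational: you carry the randomness of $i_{2,0}=i_\tau$ through explicitly and dispose of the degenerate case $i_\tau\geq k_2$ by hand, whereas the paper implicitly conditions on the starting value and bounds $\mean[i_{2,t\wedge\tau_2}]\leq k_2+C_2/(C_2-1)$ directly without separating that case. (One tiny imprecision: since $k_2$ need not be an integer, the conditional overshoot expectation is $\lceil k_2\rceil - 1 + C_2/(C_2-1)$ rather than $k_2 - 1 + C_2/(C_2-1)$, but this is still $\leq k_2 + C_2/(C_2-1)$, so the bound survives.)
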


\begin{proof}  
Since $i_{2,t \wedge \tau_2}$ is a Markov chain where 
all states between 0 and $k_2 = k/(1 - \gamma)$ are transient, $\tau_2$ is
finite and has bounded expectation.
As in Lemma~\ref{LEM:mean}, 
$i'_t = i_{2,t \wedge \tau_2} - [(1 - C_2 p)/(C_2 - 1)]
   (t \wedge \tau)$ is a martingale,
so 
\[
\mean[i_{2,t \wedge \tau_2}] - [(1-C_2 p)/(C_2 - 1)]
  \mean(t \wedge \tau) = i_{2,0} \geq k.
\]
Any step that puts $i_{2,t}$ past $k/(1 - \gamma)$ has expected value 
$C_2/(C_2 - 1)$ so 
$\mean[i_{2,t \wedge \tau_2}] \leq k(1 - \gamma)^{-1} + C_2/(C_2 - 1).$
Hence
\[
\mean(t \wedge \tau) \leq 
  [k[(1 - \gamma)^{-1} - 1] + C_2/(C_2-1)](C_2 - 1)/(1 - C_2 p).
\]
This holds for all $t$, and since $t \wedge \tau$ is an increasing random
variable that converges to $\tau$ with probability 1, the monotone convergence
theorem gives that it converges to $\tau$ as well.  Using
$k[(1 - \gamma)^{-1} - 1] = \gamma k(1 - \gamma)^{-1} = \gamma k_2$ 
finishes the proof.
\end{proof}

Now generalize.  Let $T_j$ be the number of flips in the $j$th stage, where
$i_{j,t}$ reaches 0 or $k/(1 - \gamma)^{j-1}$ in $\tau_j$ time.  Then

\begin{lemma}
Let $k_j = k/(1 - \gamma)^{j-1}$.  Set $C_1 = C$ and for $j \geq 2$ make
$C_j = C_{j-1}(1 + \gamma (1 - \gamma)^{j-2} \epsilon)^{-1}$.  Then
\[
\mean[T_j] = [\gamma k_j
   (C_j - 1) + C_j]
   (1 - C_j p)^{-1}.
\]
\end{lemma}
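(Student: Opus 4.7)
The plan is to imitate the proof of Lemma~\ref{LEM:t2}, keeping careful track of the two stage-dependent quantities $C_j$ and $k_j$. At the start of the $j$th stage, the algorithm has just updated its parameters so that it is running kernel ${\bf K}_{1,C_j}$ from an initial value $i_{j,0}$ satisfying $i_{j,0} \geq k_{j-1} = k/(1-\gamma)^{j-2}$ (since the previous stage could only have exited into the $j$th stage via the ``$i \geq k_{j-1}$'' branch). The stopping time $\tau_j$ is the first $t$ at which $i_{j,t}$ is either $0$ or at least $k_j = k/(1-\gamma)^{j-1}$, and the finite-transient-state argument used in Lemma~\ref{LEM:t2} again shows $\mean[\tau_j] < \infty$.

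First, I would form the auxiliary martingale $i'_t = i_{j, t \wedge \tau_j} - [(1 - C_j p)/(C_j - 1)](t \wedge \tau_j)$ by the same calculation as in Lemma~\ref{LEM:mean}, with $C$ replaced by $C_j$. Note that the algorithm's update rule in line 9 keeps $C_j p \leq 1 - (1-\gamma)^{j-1}\epsilon < 1$, so the drift correction is well defined. Optional stopping plus dominated convergence (justified because $i_{j, t \wedge \tau_j}$ is bounded by $k_j$ plus a single geometric overshoot) yields
\[
i_{j,0} + \frac{1 - C_j p}{C_j - 1}\,\mean[\tau_j] = \mean[i_{j,\tau_j}].
\]

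Next, I would bound the right-hand side. At the terminating step, $i_{j,t}$ either moves to $0$ or overshoots $k_j$ by no more than one geometric jump with mean $C_j/(C_j - 1)$, so $\mean[i_{j,\tau_j}] \leq k_j + C_j/(C_j - 1)$. Combined with $i_{j,0} \geq k_{j-1}$ and the telescoping identity
\[
k_j - k_{j-1} = \frac{k}{(1-\gamma)^{j-1}} - \frac{k}{(1-\gamma)^{j-2}} = \frac{k\gamma}{(1-\gamma)^{j-1}} = \gamma k_j,
\]
this gives $\mean[i_{j,\tau_j}] - i_{j,0} \leq \gamma k_j + C_j/(C_j - 1)$. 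Solving for $\mean[\tau_j]$ and observing that each pass through lines 3--6 makes exactly one $\bern(p)$ call (so $T_j = \tau_j$) produces the claimed bound $[\gamma k_j(C_j - 1) + C_j]/(1 - C_j p)$.

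The main obstacle is bookkeeping rather than technique: the two parameter sequences $(C_j)$ and $(k_j)$ defined by the algorithm's updates must be correctly threaded through, and the critical cancellation $k_j - k_{j-1} = \gamma k_j$ is precisely what replaces the initial height $k$ appearing in Lemma~\ref{LEM:mean} with the much smaller $\gamma k_j$ here. Once that identity is in place, the martingale-plus-overshoot argument from the preceding two lemmas transfers verbatim.
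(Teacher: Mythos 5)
Your argument is exactly the paper's intended proof: the paper disposes of this lemma in one line (``Use induction.\ The previous lemma is the base case $j=2$, and the induction step is the same as in the proof of the previous lemma.''), and what you have written is that induction step carried out in full, correctly using $i_{j,0}\geq k_{j-1}$, the overshoot bound $\mean[i_{j,\tau_j}]\leq k_j + C_j/(C_j-1)$, and the telescoping identity $k_j-k_{j-1}=\gamma k_j$. You also implicitly read through two typos in the lemma statement (the displayed $=$ should be $\leq$, matching Lemma~\ref{LEM:t2}, and the $(1+\gamma(1-\gamma)^{j-2}\epsilon)^{-1}$ should be a multiplication rather than a division, consistent with line 9 of the pseudocode and with $C_2 = C(1+\gamma\epsilon)$ in Lemma~\ref{LEM:t2}); your proof proceeds with the intended meaning.
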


\begin{proof}
Use induction.  The previous lemma is the base case $j = 2$, and the induction
step is the same as in the proof of the previous lemma.
\end{proof}

Fortunately, the algorithm is unlikely to reach stage $j$ for large $j$.

\begin{lemma}
\label{LEM:stagereached}
The chance that stage $j$ is reached is at most
\[
\exp(-(j-1)\gamma \epsilon k)(1 - Cp)(1 - (Cp)^k)^{-1}.
\]
\end{lemma}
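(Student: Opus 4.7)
The plan is to induct on $j$. Let $A_j$ denote the event that stage $j$ is reached, and let $\mathcal{F}_j$ be the $\sigma$-algebra at the start of stage $j$'s inner loop. Continuing past stage $j$ requires $i_{j,\tau_j} \geq k_j$ followed by $R_j = 1$, and since $\prob(R_j = 1 \mid i_{j,\tau_j}) = (1+\gamma\epsilon_j)^{-i_{j,\tau_j}}$ by line 8, the tower property gives
\[
\prob(A_{j+1}) = \mean\bigl[\ind(A_j)\,\ind(i_{j,\tau_j} \geq k_j)\,(1+\gamma\epsilon_j)^{-i_{j,\tau_j}}\bigr].
\]
The crucial invariant is $\gamma\epsilon_j \cdot k_j = \gamma\epsilon k$ for every $j$, since the updates in line 9 shrink $\epsilon$ by $1-\gamma$ and grow $k$ by $1/(1-\gamma)$, giving $\epsilon_j = (1-\gamma)^{j-1}\epsilon$ and $k_j = k/(1-\gamma)^{j-1}$.

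For the base case $j=2$, I would apply Lemma~\ref{LEM:gamblersruin} to bound $\prob(i_{1,\tau_1} \geq k) \leq (1-Cp)/(1-(Cp)^k)$ and then bound $\mean[(1+\gamma\epsilon)^{-i_{1,\tau_1}} \mid i_{1,\tau_1} \geq k]$ by $\exp(-\gamma\epsilon k)$. For the inductive step, it suffices to show that conditional on $\mathcal{F}_j$, the probability of continuing past stage $j$ is bounded by $\exp(-\gamma\epsilon k)$; by the invariant, this bound is the same at every stage, and multiplying across stages yields the claim.

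The heart of the argument, and the main technical obstacle, is the per-stage bound $\mean[(1+\gamma\epsilon_j)^{-i_{j,\tau_j}}\ind(i_{j,\tau_j} \geq k_j) \mid \mathcal{F}_j] \leq \exp(-\gamma\epsilon k)$. A crude pointwise estimate on $\{i_{j,\tau_j} \geq k_j\}$ yields only $(1+\gamma\epsilon_j)^{-k_j}$, and since $\ln(1+x) \leq x$ gives $(1+\gamma\epsilon_j)^{-k_j} \geq \exp(-\gamma\epsilon_j k_j) = \exp(-\gamma\epsilon k)$, this estimate by itself points in the wrong direction. The fix is to exploit the overshoot of the random walk: conditional on exiting at $i_{j,\tau_j} \geq k_j$, the overshoot $O_j = i_{j,\tau_j} - k_j$ is geometrically distributed on $\{0,1,2,\ldots\}$ with parameter $(C-1)/C$ by the memoryless property of the geometric step in line 4, independent of the pre-exit trajectory. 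Writing $i_{j,\tau_j} = k_j + O_j$ introduces a multiplicative factor
\[
\mean[(1+\gamma\epsilon_j)^{-O_j}] = \frac{(C-1)(1+\gamma\epsilon_j)}{(C-1) + C\gamma\epsilon_j} < 1,
\]
which, combined with the pointwise bound on the $k_j$ part, sharpens the estimate enough to deliver the exponential rate $\exp(-\gamma\epsilon k)$ per stage. Iterating the per-stage bound $j-1$ times and attaching the base-case factor $(1-Cp)/(1-(Cp)^k)$ finishes the proof.
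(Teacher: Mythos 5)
You have correctly diagnosed a genuine flaw in the paper's own argument for this lemma: the paper writes $(1 + \gamma\epsilon)^{-i} \leq \exp(-i\gamma\epsilon)$, but since $\ln(1+x) \leq x$ this inequality actually runs the other way, $(1+\gamma\epsilon)^{-i} \geq \exp(-i\gamma\epsilon)$, so the pointwise bound by itself cannot give the claimed exponential rate. Your idea of recovering the deficit from the geometric overshoot is a natural one, the memorylessness argument is correct, and the probability generating function $\mean[s^{O_j}] = (C-1)/(C-s)$ (evaluated at $s = (1+\gamma\epsilon_j)^{-1}$) is computed correctly, up to the minor point that the jump parameter at stage $j$ is $(C_j - 1)/C_j$ rather than $(C-1)/C$.

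However, the closing sentence, that the overshoot factor ``sharpens the estimate enough to deliver the exponential rate $\exp(-\gamma\epsilon k)$ per stage,'' is asserted rather than proved, and it is in fact false at the paper's recommended parameter values. Taking logarithms, the per-stage inequality you need is
\[
m\left(1 - \frac{\ln(1+y)}{y}\right) \leq \ln\left(1 + \frac{y}{(C-1)(1+y)}\right), \qquad y = \gamma\epsilon_j,\ m = \gamma\epsilon k,
\]
and for small $y$ the left side is $\approx my/2$ while the right side is $\approx y/(C-1)$, so the condition is essentially $m \leq 2/(C-1)$. For $C = 2$, $\gamma = 1/2$, $\epsilon = 0.2$ we have $m = 2.3 > 2$, and concretely $(1.1)^{-23}\cdot(1.1/1.2) \approx 0.1024 > e^{-2.3} \approx 0.1003$; the gap widens as $C$ grows. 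So your fix narrows the hole left by the paper's reversed inequality but does not close it. A complete proof would need to harvest slack from somewhere else as well, e.g.\ from the probability of exiting at the upper wall at stages $j \geq 2$ (which you have dropped by bounding the indicator by $1$), or the lemma and the value of $r$ in Theorem~\ref{THM:preciseupperbound} would need to be weakened by a constant factor.
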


\begin{proof}
From Lemma~\ref{LEM:gamblersruin} there is at least at most a  
$(1 - Cp)/(1 - (Cp)^k)$ chance that $i \geq k$ at the end of the first stage.
Then in order for the second stage to run, $R$ must equal 1 which happens with
probability
\[
(1 + \gamma \epsilon)^{-i} \leq \exp(-i\gamma \epsilon)
 \leq \exp(-\gamma \epsilon k).
\]
At the next stage $\epsilon$ has been multiplied by a factor of $1 - \gamma$,
but $k$ has been multiplied by a factor of $(1 - \gamma)^{-1}$, so 
$k_j \epsilon_j$ 
is constant at all stages.  Hence at stage $\ell \geq 3$, 
the chance of making it to stage $\ell + 1$ is at most 
$\exp(-\gamma \epsilon k)$.
There are $j - 1$ stages to pass through to stage $j$,
so the chance of making it to stage $j$ is at most
$\exp(-(j-1)\gamma \epsilon k)(1 - Cp)(1 - (Cp)^k)^{-1}.$
\end{proof}

\begin{theorem}
\label{THM:preciseupperbound}
For $k = \kasfunctionofepsilonandC$,
the total expected number of coin flips in the algorithm 
is bounded above by
\[
\mean[T] \leq \frac{k(C - 1) + C}{1 - (Cp)^k} - \frac{C - 1}{1 - Cp} 
 + \frac{
      r [\gamma k (C(1-\epsilon)^{-1} - 1) + 
       (1 - \gamma)^2 C(1 - \epsilon)^{-1})]
       (1 - r)^{-1}} {1 - (Cp)^k},
\]
where $r = \exp(-k\epsilon\gamma)(1 - \gamma)^{-2}.$
\end{theorem}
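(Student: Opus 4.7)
The plan is to decompose the total running time by stage and bound each piece separately. Since $\bern(p)$ is only invoked at line 4 of {\sc Linear\_Bernoulli\_Factory} and this line sits inside the inner repeat of lines 3--6, letting $T_j$ be the number of such flips during pass $j$ through the outer loop gives
\[
\mean[T] \;=\; \mean[T_1] \;+\; \sum_{j \geq 2} \prob(\text{stage } j \text{ is reached}) \cdot \mean[T_j \mid \text{stage } j \text{ is reached}].
\]
The first term is bounded directly by Lemma~\ref{LEM:mean}, producing the first two terms of the claim.

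For $j \geq 2$, three ingredients combine. The generalized per-stage lemma (the induction extending Lemma~\ref{LEM:t2}) bounds $\mean[T_j \mid \text{stage } j] \leq [\gamma k_j (C_j - 1) + C_j]/(1 - C_j p)$ with $k_j = k(1-\gamma)^{-(j-1)}$ and $C_j = C\prod_{i=1}^{j-1}(1+\gamma(1-\gamma)^{i-1}\epsilon)$, while Lemma~\ref{LEM:stagereached} bounds the stage-reaching probability by $\exp(-(j-1)\gamma\epsilon k)(1-Cp)/(1-(Cp)^k)$. To control the stage-dependent parameters uniformly I plan to establish two estimates: by induction, $1 - C_j p \geq (1-\gamma)^{j-1}\epsilon$ (each update shrinks the gap-to-one by exactly a factor $(1-\gamma)$, using $C_j p \leq 1-\epsilon_j$ at each stage); and $C_j \leq C\exp\bigl(\gamma\epsilon\sum_{i\geq 0}(1-\gamma)^i\bigr) = C e^\epsilon \leq C/(1-\epsilon)$.

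Plugging these into the product of the two bounds shows that the $k_j$-piece in stage $j$ acquires two factors of $(1-\gamma)^{-1}$ (one from $k_j$ itself and one from $(1-C_jp)^{-1}$), whereas the $C_j$-piece acquires only one. Paired with the $\exp(-(j-1)\gamma\epsilon k)$ factor from Lemma~\ref{LEM:stagereached}, the $k_j$-piece thus carries a geometric ratio of $r = (1-\gamma)^{-2}\exp(-\gamma\epsilon k)$, while the $C_j$-piece carries the smaller ratio $(1-\gamma)r$. The choice $k = \kasfunctionofepsilonandC$ makes $e^{-\gamma\epsilon k} = e^{-2.3}$; with $\gamma = 1/2$ this yields $r = 4e^{-2.3} < 1$, so both geometric series converge. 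Summing gives $r/(1-r)$ for the first and $(1-\gamma)r/(1-(1-\gamma)r)$ for the second; consolidating the latter using $1-(1-\gamma)r \geq 1-r$ folds both into a single $r/(1-r)$ factor, producing the third term of the claim.

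The principal obstacle is the careful bookkeeping of $(1-\gamma)$ factors distributed across the two pieces of the per-stage bound and verifying the uniform induction $1 - C_j p \geq (1-\gamma)^{j-1}\epsilon$. Once these are in hand, the remainder is routine geometric-series algebra together with factoring out $(1-Cp)/(1-(Cp)^k)$ from every stage-$j$ term.
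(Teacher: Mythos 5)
Your decomposition and bounding strategy are essentially the paper's: the same stage-wise split with $\mean[T_1]$ handled by Lemma~\ref{LEM:mean}, the same per-stage lemma, Lemma~\ref{LEM:stagereached} for the probability of reaching stage $j$, and a geometric series with ratio $r=(1-\gamma)^{-2}e^{-\gamma\epsilon k}$. The only real stylistic difference is that you sum the $k_j$-piece and the $C_j$-piece as two separate geometric series (with ratios $r$ and $(1-\gamma)r$) and then consolidate via $1-(1-\gamma)r \geq 1-r$, whereas the paper bounds the single sequence $w_j$ by showing $w_{j+1}/w_j \leq (1-\gamma)^{-2}$ and sums once; both land in the same place. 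Your derivation $C_j \leq C\exp\!\bigl(\gamma\epsilon\sum_{i\geq 0}(1-\gamma)^i\bigr)=Ce^\epsilon\leq C/(1-\epsilon)$ is actually a cleaner justification than the paper's one-line remark.

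Two small cautions. First, the auxiliary inequality you state, $1-C_jp\geq(1-\gamma)^{j-1}\epsilon$, is weaker than what your argument actually needs: to cancel the $(1-Cp)$ coming from Lemma~\ref{LEM:stagereached} against $(1-C_jp)^{-1}$ you must use $1-C_jp\geq(1-\gamma)^{j-1}(1-Cp)$, i.e.\ the per-step recursion $1-C_{j+1}p\geq(1-\gamma)(1-C_jp)$. Your parenthetical ``each update shrinks the gap by a factor $(1-\gamma)$'' is exactly that recursion and it is what you invoke later in the ratio argument, so the plan is sound; just state the lemma in the $(1-Cp)$ form so the cancellation is visible. Second, carrying out your plan yields the coefficient $(1-\gamma)\,C(1-\epsilon)^{-1}$, not the $(1-\gamma)^2\,C(1-\epsilon)^{-1}$ appearing in the theorem statement; checking the paper's own algebra, $(1-\gamma)^{-1}\bigl[\gamma(1-\gamma)^{-1}k(C_\epsilon-1)+C_\epsilon\bigr]=(1-\gamma)^{-2}\bigl[\gamma k(C_\epsilon-1)+(1-\gamma)C_\epsilon\bigr]$, so the paper's proof also produces $(1-\gamma)$ and the $(1-\gamma)^2$ in the displayed bound appears to be a typographical error rather than something your argument is missing.
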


Before proving this theorem, 
it will be helpful to know how $1 - C_jp$ behaves.
In essence, each time the algorithm advances to the next stage,
$1 - C_jp$ is at least $(1 - C_{j-1}p)(1 - \gamma)$.
\begin{lemma}
\[
1 - C_{j}p \geq (1 - Cp) (1 - \gamma)^{j-1}.
\]
\end{lemma}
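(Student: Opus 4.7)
The plan is to proceed by induction on $j$, using the recursion for $C_j$ together with the input hypothesis $Cp \leq 1 - \epsilon$, rewritten as $\epsilon \leq 1 - Cp$.

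For the base case $j = 1$, we have $C_1 = C$, so $1 - C_1 p = 1 - Cp = (1 - Cp)(1 - \gamma)^0$, with equality. For the inductive step, suppose $1 - C_{j-1} p \geq (1 - Cp)(1 - \gamma)^{j-2}$. From line 9 of the algorithm, on passing from stage $j-1$ to stage $j$ the parameter $C$ is multiplied by $1 + \gamma \epsilon_{j-1}$, where $\epsilon_{j-1} = (1 - \gamma)^{j-2} \epsilon$, so
\[
1 - C_j p = (1 - C_{j-1} p) - C_{j-1} p \cdot \gamma (1 - \gamma)^{j-2} \epsilon.
\]
Using $C_{j-1} p \leq 1$ (which follows from the inductive hypothesis, since the right-hand side is strictly positive) and $\epsilon \leq 1 - Cp$ yields
\[
1 - C_j p \geq (1 - C_{j-1} p) - \gamma (1 - \gamma)^{j-2}(1 - Cp).
\]
Applying the inductive hypothesis to the first summand then gives
\[
1 - C_j p \geq (1 - Cp)(1 - \gamma)^{j-2} - \gamma(1 - \gamma)^{j-2}(1 - Cp) = (1 - Cp)(1 - \gamma)^{j-1},
\]
which is the claimed inequality.

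I do not anticipate a genuine obstacle here: the statement is essentially algebraic bookkeeping, and it works out because the algorithm's two update rules were tuned to cancel. The shrinkage of $\epsilon_j$ by the factor $(1-\gamma)$ per stage exactly compensates for the growth of $C_j$, so the factor $(1-\gamma)$ per stage telescopes out in one line. The only place that requires any thought is the step where $\epsilon$ is replaced by the weaker upper bound $1 - Cp$; this is where the domain restriction $p \leq (1 - \epsilon)/C$ is consumed, and everything else is a direct expansion.
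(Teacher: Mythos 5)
Your proof is correct and uses the same inductive strategy as the paper, consuming the same two facts (the recursion $C_j = C_{j-1}(1+\gamma(1-\gamma)^{j-2}\epsilon)$ and the restriction $\epsilon \leq 1-Cp$). The algebraic execution is a bit more direct than the paper's (which reformulates the step as a threshold condition $\alpha \leq (1 - (1-x)(1-\gamma))/x$ on the multiplicative ratio and then manipulates the resulting fraction), but the mathematical content and the point at which the hypothesis $p \leq (1-\epsilon)/C$ is used are identical.
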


\begin{proof}  Proceed by induction.  In the base case $j = 1$ 
both sides are equal 
since $C_1 = C$.  Suppose the lemma holds for $C_j$ and consider $C_{j+1}$.

For $x \in (0,1)$ and $\alpha > 1$,
\[
1 - \alpha x \geq (1 - x)(1 - \gamma) \Leftrightarrow \alpha 
  \leq \frac{1 - (1 - x)(1 - \gamma)}{x}.
\]
Set $x = C_j p$ and 
$\alpha = C_{j+1}p/(C_j p) = (1 + \gamma (1 - \gamma)^{j-1}\epsilon)$.
The induction hypothesis upper bounds the 
numerator and lower bounds the denominator:
\[
\frac{1 - (1 - x)(1 - \gamma)}{x} = \frac{1 - (1 - C_j p)(1 - \gamma)}{x} 
 \leq \frac{1 - (1 - C p)(1 - \gamma)^j}
 {1 - (1 - C p)(1 -\gamma)^{j-1}}
\]
Let $y = (1 - Cp)(1 - \gamma)^{j-1}$.  Since
\[
\frac{1 - (1 - \gamma)y}{1 - y} = 1 + \frac{\gamma y}{1 - y},
\]
the question reduces to showing
$\alpha = 1 + \gamma(1 - \gamma)^{j-1}\epsilon \leq 1 + \gamma y/(1 - y)$.
Since $y/(1 - y)$ is an increasing function, and 
$y \geq \epsilon (1 - \gamma)^{j-1}$, this will be true if and only if
\[
(1 - \gamma)^{j-1}\epsilon \leq 
  \frac{(1 - \gamma)^{j-1} \epsilon}{1 - (1 - \gamma)^{j-1}\epsilon}
\]
which is true for $\gamma \in (0,1)$.
\end{proof}

Now a stronger statement can be shown.

\begin{lemma}
\[
1 - C_{j+1}p \geq (1 - C_jp)(1 - \gamma). 
\]
\end{lemma}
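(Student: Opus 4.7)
The plan is to reduce the claim to an arithmetic inequality on $C_j p$ via the explicit recursion $C_{j+1} = C_j(1 + \gamma(1-\gamma)^{j-1}\epsilon)$, and then close by combining the immediately preceding lemma with the input hypothesis $Cp \leq 1 - \epsilon$.

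First I would substitute the definition of $C_{j+1}$ into the left-hand side:
\[
1 - C_{j+1}p = 1 - C_j p - C_j p \cdot \gamma(1-\gamma)^{j-1}\epsilon.
\]
The target inequality $1 - C_{j+1}p \geq (1-C_jp)(1 - \gamma) = (1 - C_jp) - \gamma(1 - C_jp)$ then simplifies, after cancellation, to the single requirement
\[
C_j p \cdot (1-\gamma)^{j-1}\epsilon \;\leq\; 1 - C_j p.
\]

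Second, I would bound the right-hand side using the previous lemma: $1 - C_jp \geq (1 - Cp)(1-\gamma)^{j-1} \geq \epsilon (1-\gamma)^{j-1}$, where the last step uses $Cp \leq 1 - \epsilon$ from the standing hypothesis. The needed inequality therefore follows from the trivial bound $C_j p \leq 1$. To justify $C_j p \leq 1$, I would note that the algorithm only ever invokes stage $j$ with a coin whose parameter is $C_j p \leq 1 - \epsilon_j < 1$, where $\epsilon_j = (1-\gamma)^{j-1}\epsilon$; this is precisely the invariant maintained by the updates in line 9 of the pseudocode.

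There is no real obstacle here — the statement is a strengthening of the previous lemma that follows from one line of algebra once the recursion $C_{j+1} = C_j(1 + \gamma(1-\gamma)^{j-1}\epsilon)$ is unwound. The only mild subtlety is in invoking the previous lemma to lower-bound $1 - C_j p$ by $\epsilon(1-\gamma)^{j-1}$ rather than trying to prove the new bound directly by induction; doing the induction directly would re-derive the previous lemma, so it is cleaner to cite it. The conclusion is strictly stronger than the earlier bound because it gives a one-step contraction rather than only a cumulative one, which is what will be needed in the telescoping arguments bounding $\mean[T]$.
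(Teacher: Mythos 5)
Your proof is correct and follows essentially the same route as the paper: unwind the recursion $C_{j+1} = C_j(1+\gamma(1-\gamma)^{j-1}\epsilon)$ to reduce the claim to $C_jp\,(1-\gamma)^{j-1}\epsilon \leq 1 - C_jp$, then close with the preceding lemma's lower bound $1 - C_jp \geq \epsilon(1-\gamma)^{j-1}$ (the paper phrases the same reduction through $\alpha \leq 1 + \gamma y/(1-y)$ with $y = 1 - C_jp$, but it is the identical inequality). One small tidying remark: you needn't appeal to the pseudocode to justify $C_jp \leq 1$, since the preceding lemma already gives $1 - C_jp \geq \epsilon(1-\gamma)^{j-1} > 0$, which is stronger.
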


\begin{proof}
Using $1 - \alpha x \geq (1 - x)(1 - \gamma) \Leftrightarrow \alpha 
  \leq (1 - (1 - x)(1 - \gamma))x^{-1}.$ for $x \in (0,1)$ and $\alpha > 1$
as in the last lemma, let $x = C_j p$ and 
$\alpha = C_{j+1}p/(C_j p) = (1 + \gamma (1 - \gamma)^{j-1}\epsilon)$ as 
before.  For $y = 1 - C_jp$, the goal is to show
\[
\alpha \leq 
  \frac{1 - (1 - x)(1 - \gamma)}{x} \leq \frac{1 - (1 - C_jp)(1 - \gamma)}
 {1 - (1 - C_jp)} \leq 1 + \frac{\gamma y}{1 - y}
\]
which is true if and only if 
$1 + \gamma(1 - \gamma)^{j-1}\epsilon \leq 1 + \gamma y/(1 - y)$.
As $y/(1 - y)$ is increasing, this is true if and only if it is true
for the smallest value of $y$.  By the previous lemma 
$y = (1 - C_jp) \geq (1 - Cp)(1 - \gamma)^{j-1} \geq \epsilon(1 - \gamma)^{j-1}$.
This gives the result.
\end{proof}

\begin{proof}[Proof of Theorem~\ref{THM:preciseupperbound}]
By the monotone convergence theorem the total expected number of flips is
just the expected number of flips executed at each stage.  By 
Lemma~\ref{LEM:stagereached} the chance of reaching stage $j$
is $\exp(-(j-1)\gamma \epsilon k)(1 - Cp)(1 - (Cp)^k)^{-1}.$
Hence
\[
\mean[T] \leq \mean[\tau] + \frac{1 - Cp}{1 - (Cp)^k} \sum_{j=2}^\infty
  \exp(-(j-1)\gamma\epsilon k) \mean[T_j].
\]

Since $1 - C_jp \leq 1$, $C_j \leq C_\epsilon \defeq C/(1 - \epsilon)$ 
for all $j$.  So for all $j$, 
\[
\mean[T_j] \leq w_j \defeq 
    [\gamma (1 - \gamma)^{-j + 1} k (C_\epsilon - 1) + 
    C_\epsilon](1 - C_j p)^{-1}.
\]
By the previous lemma:
\[
\frac{w_{j+1}}{w_j} = \frac{\gamma (1 - \gamma)^{-j} k 
    (C_\epsilon - 1) + C_\epsilon} 
    {\gamma (1 - \gamma)^{-j+1} k 
    (C_\epsilon - 1) + C_\epsilon} \cdot
    \frac{1 - C_j p}{1 - C_{j+1}p}  \leq (1 - \gamma)^{-2}.
\]
Therefore
\[
\mean[T] \leq \mean[\tau] + \frac{1 - Cp}{1 - (Cp)^k} \cdot \frac{w_2
    \exp(-\gamma\epsilon k)}
   {1 - \exp(-\gamma\epsilon k)(1 - \gamma)^{-2}},
\]
as long as $r \defeq \exp(-\gamma\epsilon k)(1 - \gamma)^{-2} < 1$.

Using $(1 - Cp)(1 - C(1 + \gamma\epsilon)p)^{-1} \leq (1 - \gamma)^{-1}$ gives
\[
(1 - Cp)w_2 \leq (1 - \gamma)^{-1}[
 \gamma(1 - \gamma)^{-1} k (C_\epsilon  -1) + C_\epsilon]
= (1 - \gamma)^{-2}[\gamma k (C_\epsilon - 1) + (1 - \gamma)^2 C_\epsilon].
\]
Hence
\[
\mean[T] \leq \mean[\tau] + (1-(Cp)^k)^{-1}
  \frac{r}{1 - r} [\gamma k  (C_\epsilon - 1) + 
    (1 - \gamma)^2 C_\epsilon ].
\]
Lemma~\ref{LEM:mean} then gives the result. 
\end{proof}

\begin{proof}[Proof of Theorem~\ref{THM:upperbound}]
Theorem~\ref{THM:preciseupperbound} 
holds for any $\gamma$ and $\epsilon$ in $(0,1)$, 
as long as $k$ makes $r < 1$.  This, however, is very 
difficult to optimize for general $C$ and $\epsilon$.  Since $C = 2$
is so important to the general analytic case, consider optimal values
for this case.
Since $\exp(-\gamma k \epsilon)$ must be small, $k$ will always contain
an $\epsilon^{-1}$ factor.  Writing $k$ as 
$k = (\gamma \epsilon)^{-1} m$ gives $\exp(-\gamma k \epsilon) = \exp(-m)$.

Optimizing the bound over $m$ and $\gamma$ gives (when $\epsilon = 0.2$)
$m \approx 2.3$ and $\gamma \approx 1/2$.  So
\[
k = 4.6 \epsilon^{-1},\ r = \exp(-2.3)(1-1/2)^{-2} = 0.4010354\ldots.
\]

Note
$(Cp)^k \leq (1 - \epsilon)^{4.6/\epsilon} \leq \exp(-4.6)$, so
$1 - Cp \leq 1 - (Cp)^k \geq 0.99$.

Putting this in the bound from Theorem~\ref{THM:preciseupperbound} gives
\[
\mean[T] \leq 
%  4.65\epsilon{-1}[(C-1) + 0.21(C(1-\epsilon)^{-1} - 1)]
%   + 1.02(C + 0.42C(1 - \epsilon)^{-1}) - (C - 1)/(1 - Cp).
 \frac{4.6 \epsilon^{-1}(C - 1) + 1}{0.99} + 
 \frac{0.4011/0.99}{1 - 0.4011} 
  \left[2.3\epsilon^{-1}\left(\frac{C}{1 - \epsilon} - 1\right)
   + \frac{C/4}{1 - \epsilon} \right].
\]
When $\epsilon \geq 0.644$, the algorithm can be run with $\epsilon = 0.644$,
which makes $(1 - \min\{0.644,\epsilon\})^{-1} \leq 2.809$ and the overall
bound at most $9.5C\epsilon^{-1}$.

% \mean[T] \leq
%   [5.6\epsilon^{-1}(C - 1) - 3.2C + 5.6 + 1.02\epsilon](1 - \epsilon)^{-1}
%   - (C - 1)/(1 - Cp)
%

\end{proof}

Of course, given Theorem~\ref{THM:preciseupperbound}, a better way to run
line 1 in the algorithm is given $C$ and $\epsilon$, 
choose $m$ and $\gamma$ (and then $k = m(\gamma \epsilon)^{-1}$) 
to minimize the expected running time bound.

In~\cite{thomasb2011}, computer experiments were run to determine the 
effectiveness of several Bernoulli factories.  Their method, the 
Thomas-Blanchet Cascade approach, proved the fastest in computer 
experiments.
  
Figure~\ref{FIG:data} 
includes this data from~\cite{thomasb2011} and adds three columns:
the optimal $(m,\gamma)$ values given $C$ and $\epsilon$, 
a theoretical bound for the expected flips using 
Theorem~\ref{THM:preciseupperbound}, and an experimental bound found 
through computer experiments simulating the coin 10000 times.  The
value of $\epsilon$ was $0.2$ throughout.  The results are reported as
(mean, standard deviation).  (Note $m^*$ and $\gamma^*$ are the
near optimal $m$ and $\gamma$ values for a particular value of $C$.)
The results show not only a lower mean for the algorithm of 
Section~\ref{SEC:algorithm}, but a much reduced 
standard deviation as well.

\begin{figure}
\begin{tabular}{cccccccccc}
\toprule
$C$ & $(m^*,\gamma^*)$ & 
  \parbox{1.5in}{\centering Theory bound \\ (new method)} & 
  \parbox{1in}{\centering Exper. \\ (new method)} & 
  \parbox{1in}{\centering Thomas- \\ Blanchet}  \\ 
\midrule
2  & $(2.31,0.463)$ & $35.56$ & $(28,43)$ & $(66,512)$ \\
5  & $(2.01,0.425)$ & $133.7$ & $(107,62)$ & $(246,1215)$ \\
10 & $(1.91,0.410)$ & $296.9$ & $(239,140)$ & $(614,1851)$ \\
20 & $(1.81,0.394)$ & $623.2$ & $(516,426)$ & $(1410,3047)$ \\
\end{tabular}
\caption{Comparison of number of flips used by Bernoulli factories.
The ``Theory bound'' column comes from Theorem~\ref{THM:preciseupperbound}.
The ``Exper.'' column gives the experimental
expected value found by running the algorithm 10000 times, and the 
Thomas-Blanchet results come from~\cite{thomasb2011}.  The 
simulation results are
given as (sample mean, sample standard deviation).}
\label{FIG:data}
\end{figure}

In addition, the results show that the optimal $k$ and $\gamma$ values are
relatively insensitive to the value of $C$, and that the precise bound of
Theorem~\ref{THM:preciseupperbound} is relatively close to the true behavior
of the algorithm.

\section{Lower bound on running time}
\label{SEC:lower}

In order to establish the lower bound, it is helpful to consider the 
more general problem of developing a randomized algorithm for 
estimating the value of $p$.

\begin{definition}
Given $p^* \in (0,1]$ an {\em estimation algorithm} 
${\cal B}$ is a computable function that
takes input $U \sim \unif([0,1])$ together with 
$X_1,X_2,\ldots \sim \bern(p)$ iid and returns $\hat p \in [0,1]$.
Let $T_{\cal B}$ be the infimum of times $t$ such that the value of 
$\hat p$ only depends on $X_1,\ldots,X_t$. 
Call $T_{\cal B}$ the {\em running time}
of the algorithm.
\end{definition}

The goal is to make $\hat p$ an accurate estimate of $p$.  The
definition below comes from~\cite{dagumklr2000}.

\begin{definition}
Call $\cal B$ with output $\hat p$ 
an {\em $(\alpha,\beta)$-approximation 
algorithm} if 
\[
\prob(p(1 - \alpha) \leq \hat p \leq p(1 + \alpha)) \geq 1 - \beta.
\]
\end{definition}

Dagum, Karp, Luby and Ross~\cite{dagumklr2000} lower bounded the 
expected running time of any such algorithm ${\cal B}$ for fixed
$\epsilon$.  Their result is very general, and applies for any
distribution in $[0,1]$.  That means it applies to the $\{0,1\}$ coin flips
considered here.
The following form of their lemma takes their general
result and applies it to the special case of $\{0,1\}$ random variables.

\begin{lemma}[Lemma 7.5 of~\cite{dagumklr2000} applied to $X_i \sim \bern(p)$]
Any $(\alpha,\beta)$ approximation algorithm that uses $T_B$ flips must 
satisfy
\[
\mean[T_{\cal B}] \geq (1 - \beta)(1 - \alpha)^2 
  \ln\left(\frac{2 - \beta}{\beta}\right)
 \frac{1 - p}{4e^2 \alpha^2 p}.
\]
\end{lemma}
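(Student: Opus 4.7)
The plan is to invoke Lemma~7.5 of Dagum, Karp, Luby, and Ross~\cite{dagumklr2000} as a black box, since their bound is stated for arbitrary $[0,1]$-valued iid samples, of which $\bern(p)$ is a special case. Their lemma gives a lower bound on $\mean[T_{\cal B}]$ for any $(\alpha,\beta)$-approximation algorithm of the form
\[
\mean[T_{\cal B}] \geq (1 - \beta)(1 - \alpha)^2 \ln\!\left(\frac{2 - \beta}{\beta}\right) \frac{\sigma^2}{4 e^2 \alpha^2 \mu^2},
\]
where $\mu$ and $\sigma^2$ denote the mean and variance of the underlying distribution on $[0,1]$. The present statement is obtained by plugging in the moments of the Bernoulli distribution.

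The single substantive step is to compute those moments. For $X_i \sim \bern(p)$, one has $\mu = p$ and $\sigma^2 = p(1 - p)$, so the ratio $\sigma^2/\mu^2$ simplifies to $(1 - p)/p$. Substituting this into the display above reproduces the inequality in the statement exactly, with no further manipulation.

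The main thing to verify is that the framework used here aligns with that of~\cite{dagumklr2000}. In particular, one should check that their notion of ``number of samples'' matches $T_{\cal B}$ as defined above (the infimum of times $t$ such that $\hat p$ depends only on $X_1,\ldots,X_t$), that their notion of $(\alpha,\beta)$-approximation matches the one reproduced here, and that the use of an auxiliary $U \sim \unif([0,1])$ for randomization is compatible with their sampling model. All three alignments are standard; once they are noted, there is no real obstacle, and the proof reduces to a one-line specialization of the general bound.
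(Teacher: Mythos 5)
Your approach matches the paper's exactly: the paper does not reprove Lemma~7.5 of Dagum, Karp, Luby, and Ross but simply cites it and specializes to $\{0,1\}$-valued samples, which is precisely what you propose. Your explicit computation that $\sigma^2/\mu^2 = p(1-p)/p^2 = (1-p)/p$ for $\bern(p)$ supplies the small arithmetic step the paper leaves implicit, and the framework-alignment checks you flag (definition of $T_{\cal B}$, definition of $(\alpha,\beta)$-approximation, compatibility of the auxiliary randomness) are indeed the only things one needs to verify.
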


To apply this lemma, 
fix $C \geq 1$ and $\epsilon > 0$, and consider the following
approximation algorithm for $p$.
\begin{center}
\begin{tabular}{rl}
\toprule
\multicolumn{2}{l}{{\sc Bernoulli\_Factory\_Approximation}\ \ 
 {\em Input:} $C,$ $\bern(p)$.} \\
\midrule
1) & $A \leftarrow 0$, $S \leftarrow 0$ \\
2) & Repeat \\
3) & \hspace*{1em} Draw $W \leftarrow \bern(Cp)$ \\
4) & \hspace*{1em} $A \leftarrow A + 1$, $S \leftarrow S + W$. \\
5) & Until $S = 4$. \\
6) & $\hat p \leftarrow 4/(CA)$ \\
\bottomrule
\end{tabular}
\end{center}

\begin{lemma}
The above algorithm is an $(\epsilon^{1/2},1/4)$-approximation algorithm
for $p \in [0,1/2]$, and the
expected number of $\bern(p)$ draws is $4 \mean[T] / (Cp)$.
\end{lemma}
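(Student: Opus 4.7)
The plan has two independent parts. For the expected-flip claim, the tool is Wald's identity: each iteration of the outer repeat loop invokes the Bernoulli factory for $Cp$ on line 3 with fresh internal randomness, so if $A$ denotes the total iteration count then the number of $\bern(p)$ flips is $T_{\cal B} = \sum_{j=1}^A T_j$ with the $T_j$ iid copies of $T$. Since $A$ is a stopping time for the $W_j$ sequence and has finite expectation, Wald's identity gives $\mean[T_{\cal B}] = \mean[A]\,\mean[T]$. Recognizing $A$ as the number of $\bern(Cp)$ trials needed to accumulate $S = 4$ successes identifies $A$ as negative binomial with $\mean[A] = 4/(Cp)$, which yields $\mean[T_{\cal B}] = 4\mean[T]/(Cp)$.

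For the approximation claim, I would write $A = G_1 + G_2 + G_3 + G_4$ with $G_i \sim \geo(Cp)$ iid, compute $\mu := \mean[A] = 4/(Cp)$ and $\sigma^2 := \text{Var}(A) = 4(1-Cp)/(Cp)^2$, and observe that $\sigma^2/\mu^2 = (1-Cp)/4$. The target event $\hat p = 4/(CA) \in [p(1-\epsilon^{1/2}), p(1+\epsilon^{1/2})]$ is equivalent to $A \in [\mu/(1+\epsilon^{1/2}), \mu/(1-\epsilon^{1/2})]$, so I would bound the two tails of that interval separately. Cantelli's one-sided inequality handles the upper tail with a bound of the form $(1-\epsilon^{1/2})^2/((1-\epsilon^{1/2})^2 + 4)$ at the critical value $Cp = 1-\epsilon$, and the lower tail uses the deterministic fact $A \geq 4$: whenever $\mu/(1+\epsilon^{1/2}) \leq 4$ the lower tail is vacuous, and otherwise it reduces to a small negative-binomial tail near the minimum value.

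The main obstacle is ensuring the sum of these two tail bounds stays below $1/4$ at the $p$ of interest. At the endpoint $p = (1-\epsilon)/C$, where the underlying Bernoulli factory for $Cp$ is just barely defined, the Cantelli bound on the upper tail is at most $1/5$, while the lower tail is either vacuous (for $\epsilon$ small enough that $\mu/(1+\epsilon^{1/2}) \leq 4$) or equals $(Cp)^4 = (1-\epsilon)^4$ (for larger $\epsilon$); a short case split on $\epsilon$ shows the sum stays below $1/4$. Once that $1/4$ probability guarantee is secured at the relevant $p$, the approximation claim combined with the Wald identity yields both conclusions of the lemma and supplies the $(\epsilon^{1/2}, 1/4)$-approximation hypothesis needed for the subsequent application of the Dagum--Karp--Luby--Ross lower bound.
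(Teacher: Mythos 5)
Your plan is sound and reaches the same two conclusions, but by a noticeably different route on the concentration step. The paper handles both tails at once with a single two-sided Chebyshev inequality, stating $\sd(CA/4) \leq \epsilon^{1/2}/(4p)$ and then converting the event $\{|pCA/4 - 1| \leq \epsilon^{1/2}/2\}$ into the target event on $\hat p = 4/(CA)$ via the elementary bounds $1/(1-x) \leq 1 + 2x$ and $1/(1+x) \geq 1 - 2x$; the expected-flip count is simply asserted as $4/(Cp)$ times the per-call cost without naming Wald. You instead split the failure event into two one-sided tails, hit the upper tail with Cantelli and dispose of the lower tail with the deterministic support bound $A \geq 4$ (plus the $\prob(A = 4) = (Cp)^4$ value when the threshold falls in $(4,5]$), and invoke Wald explicitly. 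Your route has the virtue of making visible exactly where the constraint $Cp \geq 1 - \epsilon$ enters: the Cantelli ratio $t^2/\sigma^2 = 4\epsilon/((1-\epsilon^{1/2})^2(1 - Cp))$ is only large enough when $1 - Cp \leq \epsilon$, i.e.\ at $p = (1-\epsilon)/C$. The paper's claimed bound $\sqrt{1-Cp}/(4p) \leq \epsilon^{1/2}/(4p)$ has the same restriction hidden in it (it needs $1 - Cp \leq \epsilon$, which given $Cp \leq 1 - \epsilon$ pins $p$ to the boundary), so both proofs really only establish the approximation guarantee at the single value $p = (1-\epsilon)/C$ that is actually fed into the Dagum--Karp--Luby--Ross bound; you are more candid about this. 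Two small items to tighten in a final write-up: confirm that $\mu/(1 + \epsilon^{1/2}) = 4/((1-\epsilon)(1+\epsilon^{1/2})) < 5$ for the relevant $\epsilon \leq 1/2$ so the lower tail really is exactly $\prob(A=4)$, and carry out the advertised ``short case split'' numerically (at $\epsilon \approx 0.382$ one gets roughly $0.035 + 0.146 \approx 0.18 < 1/4$, and at $\epsilon = 1/2$ roughly $0.021 + 0.0625 < 1/4$), since these are the two endpoints of the nonvacuous regime.
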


\begin{proof}
Here $A$ has a negative binomial distribution with parameters $4$ and 
$Cp$ and support $\{4,5,\ldots\}$.  The mean and standard deviation 
of $CA/4$ are
\[
\mean[CA/4] = 1/p, \ \ \sd(CA/4) = (C/4)((1 - Cp))^{1/2}/(Cp)
   \leq \epsilon^{1/2}/(4p).
\]
Hence by Chebyshev's inequality
\begin{align*}
\prob(1/p - 2 \epsilon^{1/2}/(4p) 
      \leq CA/4 \leq 1/p + 2 \epsilon^{1/2}/(4p)) &\geq 1 - 1/2^2 \\
\prob(1 - \epsilon^{1/2}/2
      \leq p CA/4 \leq 1 + \epsilon^{1/2}/2) &\geq 1 - 1/4 \\
\prob((1 - \epsilon^{1/2}/2)^{-1} \geq 4/(p C A) \geq 
      (1 + \epsilon^{1/2}/2)^{-1} 
    &\geq 1 - 1/4 \\
\prob(p(1 + \epsilon^{1/2}) \geq 4/(C A) \geq p(1 - \epsilon^{1/2})) 
    &\geq 1 - 1/4,
\end{align*}
where the last line used the fact that $1/(1 - x) \leq 1 + 2x$ and
$1/(1 + x) \geq 1 - 2x$ for all $x \in [0,1/2]$.

Finally, the expected number of $\bern(p)$ 
draws is just $4/(Cp)$ times the expected
number of draws of $\bern(p)$ to generate one $\bern(Cp)$ coin.
\end{proof}

\begin{proof}[Proof of Theorem~\ref{THM:lowerbound}]
From the previous two lemmas,
\[
\frac{4 \mean[T]}{Cp} \geq (1 - 1/4)(1 - \epsilon^{1/2})^2
 \ln\left(\frac{2 - 1/4}{1/4}\right)
 \frac{1 - p}{4e^2 \epsilon p}.
\]

If $p \in [0,(1 - \epsilon )/C]$ then
$1 - p \geq (1 - (1 - \epsilon)C^{-1})$,
and solving for $\mean[T]$ gives the lower bound.
\end{proof}

\section{Conclusions}

This algorithm is the first Bernoulli factory for $f(p) = Cp$ and
$Cp \leq 1 - \epsilon$ that
has a provable upper bound for the expected number of flips.  Moreover,
this algorithm is both simple to implement and faster in practice than
existing methods.  Furthermore, when considering the running time for
all such factories that work for all 
$p \in [0,(1 - \epsilon)/C]$, this method comes with a constant factor
of the best possible expected running time.

%\section{The $C = 2$ case}
%
%Since all real analytic functions can be reduced to the case of 
%$f(p) = 2p$~\cite{nacup2005}, this case merits special attention.
%In this section it is shown how to modify the basic algorithm so that
%the running time is reduced by about ?????\%.

\bibliographystyle{plain}

\begin{thebibliography}{1}

\bibitem{asmussengt1992}
S.~Asmussen, P.~W. Glynn, and H.~Thorisson.
\newblock Stationarity detection in the initial transient problem.
\newblock {\em ACM Trans. Modeling and Computer Simulation}, 2(2):130--157,
  1992.

\bibitem{dagumklr2000}
P.~Dagum, R.~Karp, M.~Luby, and S.~Ross.
\newblock An optimal algorithm for {M}onte {C}arlo estimation.
\newblock {\em Siam. J. Comput.}, 29(5):1484--1496, 2000.

\bibitem{durrett2005}
R.~Durrett.
\newblock {\em Probability: Theory and Examples, Third edition}.
\newblock Brooks/Cole, 2005.

\bibitem{flegalh2012}
J.~Flegal and R.~Herbei.
\newblock Exact sampling for intractable probability distributions via a
  {B}ernoulli factory.
\newblock {\em Electron. J. Stat.}, 6:10--37, 2012.

\bibitem{keaneo1994}
M.~S. Keane and G.~L. O'Brien.
\newblock A {B}ernoulli factory.
\newblock {\em ACM Trans. Modeling and Computer Simulation}, 4:213--219, 1994.

\bibitem{latuszynskikpr2011}
K.~{\L}atuszy\'nski, I. Kosmidis, O. Papaspiliopoulos, and G. O. Roberts.
\newblock Simulation events of unknown probability via reverse time
  Martingales.
\newblock {\em Random Structures Algorithms}, 38:441--452, 2011.

\bibitem{nacup2005}
S.~Nacu and Y.~Peres.
\newblock Fast simulation of new coins from old.
\newblock {\em Ann. Appl. Probab.}, 15(1A):93--115, 2005.

\bibitem{thomasb2011}
A.~C. Thomas and J.~Blanchet.
\newblock A practical implementation of the {B}ernoulli factory.
\newblock 2011.
\newblock arXiv:1105.2508.

\bibitem{vonneumann1951}
J.~von Neumann.
\newblock Various techniques used in connection with random digits.
\newblock In {\em Monte Carlo Method}, Applied Mathematics Series 12. National
  Bureau of Standards, 1951.

\end{thebibliography}

\end{document}